\newtheorem{theorem}{Theorem}
\newtheorem{corollary}{Corollary}
\newtheorem{example}{Example}
\newtheorem{lemma}{Lemma}
\newtheorem{remark}{Remark}
\newcommand{\eps}{\varepsilon}
\DeclareMathAlphabet{\mathpzc}{OT1}{pzc}{m}{it}
\DeclareMathOperator{\essinf}{ess\;inf}
\DeclareMathOperator{\Graph}{Gr}
\DeclareMathOperator{\fix}{Fix}
\DeclareMathOperator{\F}{F}
\DeclareMathOperator{\h}{H}
\DeclareMathOperator{\co}{co}
\DeclareMathOperator{\modulus}{mod}
\DeclareMathOperator{\ev}{ev}
\newcommand{\w}{\tilde}
\newcommand{\map}{\multimap}
\newcommand{\<}{\leqslant}
\newcommand{\mf}{{\Phi}}
\newcommand{\n}{{n\geqslant 1}}
\newcommand{\R}[1]{\varmathbb{R}^{#1}}
\newcommand{\Hi}{\mathbb{H}}
\newcommand{\f}{\left}
\newcommand{\g}{\right}
\newcommand{\res}[2]{#1\:\rule[-1.5mm]{0.45pt}{4mm}\,\rule[-1mm]{0mm}{4mm}_{#2}}
\begin{document}
\title[Acyclicity of the solution set of two-point boundary value problems]{Acyclicity of the solution set of two-point boundary value problems for second order multivalued differential equations}
\author{Rados\l aw Pietkun}
\subjclass[2010]{34B05, 34B27, 34G10, 47H08, 47H10, 47H30}
\keywords{differential inclusion, integral inclusion, boundary value problem, periodic solution, solution set, fixed point theorem, Green's function}
\address{Toru\'n, Poland}
\email{rpietkun@pm.me}

\begin{abstract}
The topological and geometrical structure of the set of solutions of two-point boundary value problems for second order differential inclusions in Banach spaces is investigated. It is shown that under the Carath\'eodory-type assumptions the solution set of the periodic boundary value problem is nonempty compact acyclic in the space of continuously differentiable functions as well as in the Bochner-Sobolev space $\Hi^2$ endowed with the weak topology. The proof relies heavily on the accretivity of the right-hand side of differential inclusion. The Lipschitz case is treated separately. As one might expect the solution set is, in this case, an absolute retract.
\end{abstract}
\maketitle


\section{Introduction}
It is known that the Cauchy problem with continuous right-hand side possesses local solutions although the uniqueness property does not hold in general. This observation made by Peano became the starting point for investigating the topology of solutions of initial value problems. A precise topological characterization of the solution set was found in 1942 by N. Aronszajn, who improved the results of Kneser and Hukuhara by showing that the Peano funnel is an $R_\delta$-set. This means notably that in the absence of lipschitzianity of the right-hand side of the respective differential equation, the set of all solutions may not be a singleton but, from the point of view of algebraic topology, it is equivalent to a one point space. In 1986 De Blasi and Myjak generalized Aronszajn's theorem to the case of differential inclusions with usc convex valued right-hand sides. Since then showed up an overwhelming number of papers devoted to the study of the structure of the solution set for differential equations and inclusions. After rich and extensive bibliography on the subject we refer the reader to the monograph \cite{djebali}.\par Nevertheless, the matter of description of the topology of solutions to other boundary value problems had been taken so far relatively rare. Some insight of what has been achieved in this area gives the overview contained in \cite[III.3.]{andres}. It is worth noting that there are no reliable results concerning topological properties of the solution set of so-called nonlocal Cauchy problems for differential equations with the right side, which is not Lipschitz continuous.\par The purpose of this note is to prove results describing the topological and geometrical properties of the set $S_{\!F}$ of all solutions of two-point boundary value problems for second order differential inclusions defined in an abstract Banach space $E$. In Section 3. we show that the solution set of the following periodic boundary value problem
\begin{equation}\label{inclusion}
\begin{gathered}
x''+a_1(t)x'+a_0(t)x\in F(t,x),\;\text{a.e. on } I:=[0,1],\\
x(0)=x(1),\\
x'(0)=x'(1),
\end{gathered}
\end{equation}
where $a_1,a_0\colon I\to\R{}$ and $F\colon I\times E\map E$, is nonempty compact acyclic as a subset of $C^1(I,E)$ as well as a subset of the space of solutions $\Hi^2(0,1;E)$ provided this space is equipped with the weak topology and the right-hand side $F$ is a convex valued weak upper Carath\'eodory multimap. Our approach consists in replacing the problem (1) with an equivalent integral problem by the use of Green's function for reduced system and applying the Browder-Gupta type result characterizing the set of fixed points of an appriopriate nonlinear operator. A key role in this approach plays an accretivity assumption regarding the right-hand side $F$, which entails uniqueness of solutions to problems constituting the approximation of the original problem.\par As one knows, the solution set of the Cauchy problem associated to a differential inclusion $\dot{x}\in F(t,x)$ is contractible provided $F$ admits a measurable-locally Lipschitz selection. If the multivalued right-hand side $F$ is simply Lipschitz continuous and possesses not necessarily convex values, then with the aid of theorem \cite[Th.1.]{bres} it can be shown that the set of derivatives of all Carath\'eodory solutions of the Cauchy problem is a retract of the Bochner space $L^1(I,E)$. The set of solutions for the initial value problem is nothing but a continuous image through the integral operator. Therefore, this set is always at least arcwise connected. In Section 4. our goal is to present a detailed proof of the fact that the solution set of the following two-point boundary value problem
\begin{equation}\label{inclus}
\begin{gathered}
a_2(t)x''+a_1(t)x'+a_0(t)x\in F(t,x)\;\;\;\text{a.e. on } I,\\
b_{11}x(0)+b_{12}x'(0)+c_{11}x(1)+c_{12}x'(1)=d_1,\\
b_{21}x(0)+b_{22}x'(0)+c_{21}x(1)+c_{22}x'(1)=d_2,
\end{gathered}
\end{equation}
where $a_2,a_1,a_0\colon I\to\R{}$, $a_2(t)\neq 0$ on $I$ and $F$ is a measurable Lipschitz multivalued map, is a retract of the Bochner-Sobolev space $W^{2,1}(0,1;E)$. Our reasoning is also based on the result concerning the set of fixed points of a multivalued contraction with decomposable values.  
\section{Preliminaries}
Let $(E,|\cdot|)$ be a Banach space, $E^*$ its normed dual and $\sigma(E,E^*)$ its weak topology. Then $J\colon E\to 2^{E^*}$, defined by \[J(x):=\f\{x^*\in E^*\colon |x^*|=|x|\;\;\text{and}\;\;x^*(x)=|x|^2\g\}\] is called the duality map of $E$. The semi-inner products $\langle\cdot,\cdot\rangle_\pm\colon E\times E\to\R{}$ are given by the formulas \[\langle x,y\rangle_+:=\max\{y^*(x)\colon y^*\in J(y)\}\;\;\text{and}\;\;\langle x,y\rangle_-:=\min\{y^*(x)\colon y^*\in J(y)\}\] (for more information about these notions consult \cite{barbu,deimling,deimling2,papa}). The (normed) space of bounded linear operators from a Banach space $E_1$ to a Banach space $E_2$ is denoted by $\mathcal{L}(E_1,E_2)$. Given $T\in\mathcal{L}(E_1,E_2)$, $||T||_{{\mathcal L}}$ is the norm of $T$. For any $\eps>0$ and $A\subset E$, $B(A,\eps)$ ($D(A,\eps)$) is an open (closed) $\eps$-neighbourhood of the set $A$. The closure and the closed convex envelope of $A$ will be denoted by $\overline{A}$ and $\overline{\co} A$, respectively. If $x\in E$ we set $d(x,A):=\inf\{|x-y|\colon y\in A\}$. Besides, for two nonempty closed bounded subsets $A$, $B$ of $E$ we denote by $d_E(A,B)$ the Hausdorff distance from $A$ to $B$, i.e. $d_E(A,B):=\max\{\sup\{d(x,B)\colon x\in A\},\sup\{d(y,A)\colon y\in B\}\}$.\par We denote by $(C(I,E),||\cdot||)$ (resp. $(C^1(I,E),||\cdot||_{C^1})$) the Banach space of all continuous (resp. continuously differentiable) maps $I\to E$ equipped with the maximum norm (resp. $||x||_{C^1}=||x||+||\dot{x}||$). Let $1\<p<\infty$. By $(L^p([a,b],E),||\cdot||_p)$ we mean the Banach space of all (Bochner) $p$-integrable maps $f\colon[a,b]\to E$, i.e. $f\in L^p([a,b],E)$ iff map f is strongly measurable and \[||f||_p=\left(\int_a^b|f(t)|^p\,dt\right)^{\frac{1}{p}}<\infty.\] Recall that strong measurability is equivalent to the usual measurability in case $E$ is separable. A subset $K\subset L^1(I,E)$ is called decomposable if for every $u, w\in K$ and every Lebesgue measurable $A\subset I$ we have $u\cdot\chi_A+w\cdot\chi_{I\setminus A}\in K$. Recall that the Bochner-Sobolev space $W^{1,p}(0,1;E)$ is defined by equality \[W^{1,p}(0,1;E)=\{u\in L^p(I,E)\colon u'\in L^p(I,E)\}.\] It is a Banach space endowed with the norm $||u||_{W^{1,p}}:=||u||_p+||u'||_p$.
\par Given metric space X, a set-valued map $F\colon X\map E$ assigns to any $x \in X$ a nonempty subset $F(x)\subset E$. $F$ is (weakly) upper semicontinuous, if the small inverse image $F^{-1}(A)=\{x\in X\colon F(x)\subset A\}$ is open in $X$ whenever $A$ is (weakly) open in $E$. We have the following characterization: a map $F\colon X\map E$ with convex values is weakly upper semicontinues and has weakly compact values iff given a sequence $(x_n,y_n)$ in the graph $\Graph(F)$ with $x_n\to x$ in $X$, there is a subsequence $y_{k_n}\rightharpoonup y\in F(x)$ ($\rightharpoonup$ denotes the weak convergence). A map $F\colon X\map E$ is lower semicontinuous, if the large counter image $F^{-1}_+(A)=\{x\in X\colon F(x)\cap A\neq\varnothing\}$ is open in $X$ for any open $A\subset E$. The multimap $F\colon X\map E$ is proper if the preimage $F^{-1}_+(A)$ is compact for every compact subset $A$ of $E$. Let $\Omega$ be a set with a $\sigma$-field $\Sigma$ of subsets of $\Omega$. We say that $F\colon\Omega\map E$ is $\Sigma$-measurable iff for every open $A\subset E$ the large counter image $F^{-1}_+(A)\in\Sigma$. We shall call $F\colon I\times E\map E$ a lower Carath\'eodory multivalued map if $F(t,\cdot)$ is lower semicontinuous for each fixed $t\in I$ and the map $F(\cdot,\cdot)$ is ${\mathcal L}(I)\otimes{\mathcal B}(E)$-measurable, where ${\mathcal L}(I)$ and ${\mathcal B}(E)$ stands for Lebesgue $\sigma$-field of $I$ and Borel $\sigma$-field of $E$, respectively. Remind that the set-valued map $F\colon D\subset E\map E$ is said to be accretive if \[\langle u-w,x-y\rangle_+\geqslant 0\;\;\;\text{for all }x,y\in D, u\in F(x)\text{ and }w\in F(y),\] which will be abbreviated by $\langle F(x)-F(y),x-y\rangle_+\geqslant 0$ on $D\times D$. The set of all fixed points of the map $F\colon E\map E$ is denoted by $\fix(F)$.\par Let $H_*(\cdot)$ denote the \v Cech homology functor with coefficients in the field of rational numbers $\varmathbb{Q}$ (see \cite{andres,gorn}). A compact topological space $X$ having the property
\[H_q(X)=
\begin{cases}
0&\mbox{for }q\geqslant 1,\\
\varmathbb{Q}&\mbox{for }q=0
\end{cases}\]
is called acyclic. In other words its homology are exactly the same as the homology of a one point space. A compact (nonempty) space $X$ is an $R_\delta$-set if there is a decreasing sequence of contractible compacta $(X_n)_\n$ containing $X$ as a closed subspace such that $X=\bigcap_\n X_n$ (compare \cite{hyman}). In particular, $R_\delta$-sets are acyclic.\par An upper semicontinuous map $F\colon E\map E$ is called acyclic if it has compact acyclic values. A set-valued map $F\colon E\map E$ is admissible (compare \cite[Def.40.1]{gorn}) if there is a metric space $X$ and two continuous functions $p\colon X\to E$, $q\colon X\to E$ from which $p$ is a Vietoris map such that $F(x)=q(p^{-1}(x))$ for every $x\in E$. Clearly, every acyclic map is admissible. Moreover, the composition of admissible maps is admissible (\cite[Th.40.6]{gorn}). In particular the composition of two acyclic maps is admissible. 
\par A real function $\gamma$ defined on the family ${\mathcal B}(E)$ of bounded subsets of $E$ is called a measure of non-compactness (MNC) if $\gamma(\Omega)=\gamma(\overline{\co}\Omega)$ for any bounded subset $\Omega$ of $E$. The following example of MNC is of particular importance: given $E_0\subset E$ and $\Omega\in{\mathcal B}(E_0)$, 
\begin{align*}
\beta(\Omega;E_0):=\inf\Biggl\{\eps>0:\Biggr.&\text{ there are finitely many points }x_1,\dots,x_n\in E_0\\&\Biggl.\text{ with }\Omega\subset\bigcup_{i=1}^nB(x_i,\eps)\Biggr\}
\end{align*} 
is the Hausdorff MNC relative to the subspace $E_0$. Recall that this measure is regular, i.e. $\beta(\Omega;E_0)=0$ iff $\Omega$ is relatively compact in $E_0$; monotone, i.e. if $\Omega\subset\Omega'$ then $\beta(\Omega;E_0)\<\beta(\Omega';E_0)$ and invariant with respect to union with compact sets, i.e. $\beta(K\cup\Omega;E_0)=\beta(\Omega;E_0)$ for any relatively compact $K\subset E_0$ (for details see \cite{sadovski}). A set-valued map $F\colon E\map E$ is condensing relative to MNC $\gamma$ (or $\gamma$-condensing) provided, for every $\Omega\in{\mathcal B}(E)$, the set $F(\Omega)$ is bounded and $\gamma(\Omega)\<\gamma(F(\Omega))$ implies relative compactness of $\Omega$.\par Let $\varmathbb{R}_+^I$ and $\R{N}_+$ denote the partially ordered linear space of all scalar positively valued functions defined on $I$ and respectively the positive cone for the standard order on a vector lattice $\R{N}$. The following Darbo-Sadovskii-type fixed point theorem for condensing admissible maps settles the topological properties of the solution set of boundary value problems, which are the subject of interest in this paper.
\begin{theorem}\label{fix}
Let $\gamma\colon{\mathcal B}(E)\to\varmathbb{R}_+^I\times\R{N}_+$ be an MNC. Assume that $\gamma$ is
\begin{itemize}
\item[(i)] monotone, i.e. $\Omega_1\subset\Omega_2\Rightarrow\gamma(\Omega_1)\<\gamma(\Omega_2)$,
\item[(ii)] positively subhomogeneous, i.e. $\gamma(\lambdaup\Omega)\<\lambdaup\gamma(\Omega)\;\;$ for $\lambdaup\in[0,\infty)$,
\item[(iii)] algebraically semiadditive, i.e. $\gamma(\Omega_1+\Omega_2)\<\gamma(\Omega_1)+\gamma(\Omega_2)$,
\item[(iv)] regular, i.e. $\gamma(\Omega)=0\Leftrightarrow\Omega$ is relatively compact. 
\end{itemize}
Suppose that $D\subset E$ is nonempty closed convex and bounded and $F\colon D\map D$ is an admissible $\gamma$-condensing set-valued map. Then $\fix(F)$ is nonempty and compact.
\end{theorem}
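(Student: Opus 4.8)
\emph{Proof proposal.} The plan is to cut $D$ down to a nonempty compact convex subset on which $F$ still acts, apply the Lefschetz fixed point theorem for admissible maps there, and then recover compactness of $\fix(F)$ from a second application of the condensing hypothesis. To begin, fix a point $x_0\in D$ and let $\Sigma$ denote the family of all nonempty closed convex sets $C\subset D$ such that $x_0\in C$ and $F(C)\subset C$. Since $D\in\Sigma$, the intersection $K:=\bigcap_{C\in\Sigma}C$ is a well-defined, nonempty (it contains $x_0$), closed and convex subset of $D$, and $F(K)\subset K$ because $F(K)\subset F(C)\subset C$ for every $C\in\Sigma$. Setting $K_1:=\overline{\co}\bigl(F(K)\cup\{x_0\}\bigr)$, one checks that $K_1\subset K$ (the set $K$ is closed, convex, and contains $F(K)\cup\{x_0\}$) and that $F(K_1)\subset F(K)\subset K_1$, so that $K_1\in\Sigma$; minimality of $K$ then forces $K=K_1=\overline{\co}\bigl(F(K)\cup\{x_0\}\bigr)$.

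Next I would show that $K$ is compact. Working pointwise on $I$ and coordinatewise on $\R{N}_+$, one deduces from the axioms (ii)--(iv) that $\gamma$ is invariant under translations and under adjoining a relatively compact set: indeed $\gamma(\{x_0\})=0$ by (iv), and $\overline{\co}(A\cup\{x_0\})$ is contained in a translate of $\overline{\co}(A)$ enlarged by a compact segment, a situation controlled by (ii) and (iii). Together with the defining relation $\gamma(\Omega)=\gamma(\overline{\co}\,\Omega)$ of an MNC this gives
\[
\gamma(K)=\gamma\bigl(\overline{\co}(F(K)\cup\{x_0\})\bigr)=\gamma(F(K)\cup\{x_0\})=\gamma(F(K)).
\]
In particular $\gamma(K)\leqslant\gamma(F(K))$; as $K\subset D$ is bounded, the $\gamma$-condensing property applied to $\Omega=K$ shows that $K$ is relatively compact, and, being closed, $K$ is compact.

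Now $F|_K\colon K\map K$ is an admissible self-map of a nonempty compact convex---hence contractible, hence acyclic---subset of the Banach space $E$. Such a $K$ is a compact absolute retract, so $F|_K$ is a Lefschetz map and its Lefschetz number coincides with that of the identity on $H_*(K)=H_*(\{\mathrm{pt}\})$, which equals $1\neq 0$; the Lefschetz fixed point theorem for admissible maps (see \cite{gorn}) thus yields $x\in K$ with $x\in F(x)$, so $\fix(F)\neq\varnothing$. Finally, an admissible map is upper semicontinuous with compact values and therefore has a closed graph, so $\fix(F)=\{x\colon(x,x)\in\Graph(F)\}$ is closed; moreover each fixed point lies in its own image, whence $\fix(F)\subset F(\fix(F))$ and, by (i), $\gamma(\fix(F))\leqslant\gamma(F(\fix(F)))$. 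Since $\fix(F)\subset D$ is bounded, the condensing property once more forces $\fix(F)$ to be relatively compact, and a closed relatively compact set is compact.

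The step I expect to be the main obstacle is the bookkeeping behind the second display: one must check with care that the abstract axioms (i)--(iv) for the function- and vector-valued measure of non-compactness $\gamma$ really do render the closed convex hull and the adjunction of $x_0$ harmless---a small argument with star-shaped sets, and, in its general formulation, Mazur's theorem on the compactness of the convex hull of a compact set, executed componentwise in $\varmathbb{R}_+^I\times\R{N}_+$. The remaining ingredients, namely the minimal-invariant-set construction and the Lefschetz theory of admissible self-maps of compact absolute retracts, are routine.
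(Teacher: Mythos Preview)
Your proof is correct and follows exactly the Sadovski\u{\i}-type scheme (minimal invariant closed convex set, then Lefschetz theory for admissible self-maps of a compact convex set) that underlies the propositions from G\'orniewicz's monograph to which the paper's own proof simply defers. For the step you flagged as the obstacle: fixing any $a_0\in\overline{\co}\,F(K)$ and writing $(1-\mu)b+\mu x_0=\bigl((1-\mu)b+\mu a_0\bigr)+\mu(x_0-a_0)$ gives $\overline{\co}\bigl(F(K)\cup\{x_0\}\bigr)\subset\overline{\co}\,F(K)+[0,1](x_0-a_0)$, so monotonicity (i), semiadditivity (iii), and regularity (iv) applied to the compact segment $[0,1](x_0-a_0)$ yield $\gamma(K)\leqslant\gamma(F(K))$ at once---no Mazur-type argument is needed.
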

\begin{proof}
Let us trace the scheme of proof of \cite[Th.59.12]{gorn}. In order to show \cite[Prop.59.3]{gorn} we need to know that the MNC $\gamma$ is monotone, algebraically semiadditive and semiregular in the sense that: $\Omega$ is relatively compact $\Rightarrow\gamma(\Omega)=0\in\varmathbb{R}_+^I\times\R{N}_+$. The map $F$ must be $\gamma$-condensing.\par The proof of \cite[Prop.59.11]{gorn} requires the presumption that $\gamma$ is regular and assumes its values in the partially ordered space $\varmathbb{R}_+^I$ or $\R{N}_+$ (or in the Cartesian product of these spaces).\par Eventually, the assumption that $\gamma$ is positively subhomogeneous and $F$ is $\gamma$-condensing enables us to carry out the proof of \cite[Th.59.12]{gorn}.\par In support of \cite[Prop.59.2]{gorn} we need to use the monotonicity of $\gamma$ and the assumption that $F$ is $\gamma$-condensing.
\end{proof}
\par The eponymous acyclicity of the solution set of boundary value problems under consideration is the result of application of the following multivalued generalization of Browder-Gupta theorem \cite[Th.2.1.]{gabor}.
\begin{theorem}\label{gabor}\cite[Th.2.16.]{gabor}
Let $X$ be a metric space, $E$ a Banach space and let $\Psi\colon X\map E$ be an usc proper set-valued map with compact values. Assume that there exists a sequence of compact convex valued usc proper multimaps $\Psi_n\colon X\map E$ such that
\begin{itemize}
\item[(i)] $\Psi_n(x)\subset B\f(\Psi\f(B\f(x,\frac{1}{n}\g)\g),\eps_n\g)$ for every $x\in X$, where $\eps_n\to 0^+$ as $n\to\infty$,
\item[(ii)] $0\in\Psi(x)\Rightarrow\Psi_n(x)\cap D(0,\eps_n)\neq\varnothing$,
\item[(iii)] for every $\n$ and every $u\in E$ with $|u|<\eps_n$ the set $\{x\in X\colon u\in\Psi_n(x)\}$ is nonempty acyclic.
\end{itemize}
Then the set $\,\Psi^{-1}(\{0\})$ is compact acyclic.
\end{theorem}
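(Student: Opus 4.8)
The plan is to run the multivalued version of the Browder--Gupta scheme: to represent $P:=\Psi^{-1}(\{0\})$ as a limit of acyclic sets fabricated out of the multimaps $\Psi_n$, and then to conclude by the continuity of \v Cech (co)homology. For $r>0$ and $\n$ put $A_n^r:=\{x\in X\colon\Psi_n(x)\cap D(0,r)\neq\varnothing\}$. Compactness of $P$ is immediate: $\{0\}$ is compact and $\Psi$ is proper with closed values, so $P=\Psi^{-1}_+(\{0\})$ is compact. Nonemptiness: using (iii) with $u=0$, choose $x_n$ with $0\in\Psi_n(x_n)$; by (i) there are $y_n$ with $|x_n-y_n|<\frac1n$ and $v_n\in\Psi(y_n)$, $|v_n|<\eps_n$. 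Since $\{v_n\}\cup\{0\}$ is compact and $\Psi$ is proper, some subsequence $y_{k_n}$ converges to a point $y$; then $x_{k_n}\to y$ and, the graph of $\Psi$ being closed (upper semicontinuity together with compact values), $0\in\Psi(y)$, whence $y\in P$.

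First main step: for every $\n$ and every $r\in(0,\eps_n)$ the set $A_n^r$ is acyclic. I would obtain this from the Vietoris--Begle mapping theorem (cf.\ \cite{gorn}) applied to the two coordinate projections of the truncated graph $\Gamma:=\{(x,u)\colon u\in\Psi_n(x),\ |u|\leqslant r\}$. The projection $\Gamma\to A_n^r$ is surjective, proper, and has fibres $\Psi_n(x)\cap D(0,r)$ --- nonempty compact convex, hence acyclic. The projection $\Gamma\to D(0,r)$ is surjective, proper, and has fibres $\{x\colon u\in\Psi_n(x)\}$, which are acyclic by (iii) since $|u|\leqslant r<\eps_n$. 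Properness of both projections is exactly where the hypothesis that each $\Psi_n$ is proper with compact values and upper semicontinuous enters; in an infinite-dimensional $E$ this must be checked, balls not being compact. Since $D(0,r)$ is convex (so acyclic), Vietoris--Begle yields $H_*(A_n^r)\cong H_*(\Gamma)\cong H_*(D(0,r))$, i.e.\ $A_n^r$ is acyclic, in particular connected.

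Second main step: $P=\bigcap_{\n}A_n^{\eps_n}$. The inclusion ``$\subseteq$'' is precisely (ii); ``$\supseteq$'' follows from (i) and closedness of $\Graph(\Psi)$ exactly as in the nonemptiness argument above (a point $x$ of the intersection produces $u_n\in\Psi_n(x)$ with $|u_n|\leqslant\eps_n$, hence $y_n\to x$ with $v_n\in\Psi(y_n)$, $|v_n|<2\eps_n$, forcing $0\in\Psi(x)$). The same argument, using properness of $\Psi$, shows moreover that $(A_n^{\eps_n})$ collapses onto $P$: every open neighbourhood of $P$ contains $A_n^{\eps_n}$ for all large $n$. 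Consequently, for every $\eps>0$ one can interpose an acyclic set $A$ with $P\subseteq A\subseteq B(P,\eps)$. Granting this, acyclicity of $P$ is the standard acyclic-approximation principle (cf.\ \cite{hyman,gorn}): by tautness of \v Cech cohomology on the compact metric space $P$, a class in $H^q(P)$ ($q\geqslant 1$) is the image of a class over some open $V\supseteq P$, and inserting an acyclic $A$ with $P\subseteq A\subseteq V$ the factorisation $H^q(V)\to H^q(A)=0\to H^q(P)$ annihilates it, while connectedness of $P$ (so $H_0(P)=\varmathbb{Q}$) follows from connectedness of the approximating sets.

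The step I expect to be the real obstacle is the very last one: manufacturing an \emph{acyclic} set that both contains $P$ and lies inside a prescribed neighbourhood of it. The fibrewise acyclicity (iii) is supplied only over the \emph{open} ball $B(0,\eps_n)$, whereas the inclusion $P\subseteq A_n^{\eps_n}$ coming from (ii) involves the \emph{closed} ball $D(0,\eps_n)$; one must reconcile the two --- passing to radii strictly below $\eps_m$ for suitable (possibly large) $m$ and verifying that $P$ still fits inside the resulting acyclic set --- and along the way confirm that the projections of the first step are genuinely proper in the Banach space $E$. Everything else is a routine transcription of the Browder--Gupta and Vietoris--Begle arguments.
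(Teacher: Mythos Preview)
The paper does not prove this theorem at all: it is quoted verbatim from \cite[Th.2.16.]{gabor} and used as a black box in the proof of Theorem~\ref{acyc}. There is therefore no ``paper's own proof'' to compare against.

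That said, your sketch follows the expected Browder--Gupta/Vietoris--Begle route that underlies Gabor's original argument, and the overall architecture is sound. Your honest flagging of the two delicate points --- the open/closed ball mismatch between (ii) and (iii), and the properness of the second projection $\Gamma\to D(0,r)$ when $E$ is infinite-dimensional --- is apt. The first is harmless: since $\eps_n\to 0^+$, for each $n$ choose $m>n$ with $\eps_m<\eps_n$ and work with $A_n^{\eps_m}$, which is acyclic by your first step and still contains $P$ by (ii) applied at level $m$ combined with $\Psi_m(x)\subset B(\Psi(B(x,1/m)),\eps_m)$; the collapsing-onto-$P$ argument goes through unchanged. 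The second point is where your sketch is genuinely incomplete: $D(0,r)$ is not compact in infinite dimensions, so $A_n^r=(\Psi_n)^{-1}_+(D(0,r))$ need not be compact from properness alone, and neither projection of $\Gamma$ is automatically proper. One must argue instead that $A_n^r$ is compact via condition (i) and properness of $\Psi$ (the set $\Psi_n(A_n^r)\cap D(0,r)$ lies in a fixed neighbourhood of $0$, and (i) pushes $A_n^r$ into a neighbourhood of the compact set $\Psi^{-1}_+(D(0,r+\eps_n))$ --- but this last set is only compact if $D(0,r+\eps_n)$ is, which again fails). The actual remedy in Gabor's proof is different: one works with the \emph{compact} set $\Psi_n(P)\cap D(0,\eps_n)$ (or a suitable compact enlargement) rather than the full ball, and uses that $P$ itself is compact to keep all auxiliary sets compact. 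Your outline captures the spirit but would need this compactness bookkeeping tightened before it constitutes a proof.
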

\section{ the Carath\'eodory case}
Denote by $L\colon D_L\to L^2(I,E)$ a continuous linear differential operator, given by the formula 
\[L:=\frac{d^2}{dt^2}+a_1(t)\frac{d}{dt}+a_0(t),\] where the domain $D_L$ forms a subspace of the Bochner-Sobolev space \[\Hi^2(0,1;E):=W^{2,2}(0,1;E):=\left\{u\in W^{1,2}(0,1;E)\colon u'\in W^{1,2}(0,1;E)\right\}\] accordingly to the definition \[D_L:=\left\{x\in\Hi^2(0,1;E)\colon x(0)=x(1)\text{ and }x'(0)=x'(1)\right\}.\]
An element $x\in\Hi^2(0,1;E)$ will be called a solution to \eqref{inclusion} if $x\in D_L$ and there is a square-integrable $w\in L^2(I,E)$ such that $w(t)\in F(t,x(t))$ for a.a. $t\in I$ and $Lx=w$. \par Recall that the Nemytski\v{\i} operator $N_F\colon\Hi^2(0,1;E)\map L^2(I,E)$, corresponding to the right-hand side $F$, is a set-valued map defined by \[N_F(x)=\left\{u\in L^2(I,E)\colon u(t)\in F(t,x(t))\mbox{ for a.a. }t\in I\right\}.\]
The BVP \eqref{inclusion} is equivalent to the following operator inclusion \[Lx\in N_F(x).\] 
\par Let us move on to the key issue of the assumptions, on which the results of the current section are based. We will use the following hypotheses on the mapping $F\colon I\times E\map E$:
\begin{itemize}
\item[$(\F_1)$] for every $(t,x)\in I\times E$ the set $F(t,x)$ is nonempty closed and convex,
\item[$(\F_2)$] the map $F(\cdot,x)$ has a strongly measurable selection for every $x\in E$,
\item[$(\F_3)$] the graph $\Graph(F(t,\cdot))$ is closed in $(E,|\cdot|)\times(E,\sigma(E,E^*))$ for a.a. $t\in I$,
\item[$(\F_4)$] $F$ possesses $L^2$-sublinear growth, i.e. there is $c\in L^2(I,\R{})$ and $m\geqslant 0$ such that for all $x\in E$ and for a.a. $t\in I$,\[||F(t,x)||^+:=\sup\{|y|\colon y\in F(t,x)\}\<c(t)+m|x|,\]
\item[$(\F_5)$] there is a function $\eta\in L^2(I,\R{})$ such that for all bounded subsets $\Omega\subset E$ and for a.a. $t\in I$ the inequality holds \[\beta(F(t,\Omega);E)\<\eta(t)\beta(\Omega;E),\]
\item[$(\F_6)$] the map $F(t,\cdot)$ is accretive for a.a. $t\in I$.
\end{itemize}
\begin{remark}\label{zalozenia}
Suppose that $E$ is reflexive. By $(\F_4)$ the map $F(t,\cdot)$ is locally bounded a.e. on $I$. Consider a sequence $(x_n,y_n)_\n$ in the graph $\Graph(F(t,\cdot))$ with $x_n\to x$ in the norm of $E$. Since $E$ is reflexive and $F(t,\cdot)$ locally bounded, there must be a subsequence $y_{k_n}\rightharpoonup y$. Now, bearing in mind $(\F_3)$, i.e. that $\Graph(F(t,\cdot))$ is strongly-weakly closed, we obtain $y\in F(t,x)$. Therefore, $F(t,\cdot)$ is weakly upper semicontinuous for a.a. $t\in I$.\par In general, weak upper semicontinuity is a significantly stronger assumption then the condition $(\F_3)$.
\end{remark} 
\par The following assumption is our standing hypothesis for the rest of the ongoing section:\par\vspace{0.5\baselineskip}\noindent
{\bf Assumption $\mathbf{(G_1)}$:}
\begin{minipage}[t]{9.5cm}
The coefficient mappings $a_1\colon I\to\R{}$ and $a_0\colon I\to(-\infty,0]$ are continuous. The reduced system (the scalar completely homogeneous boundary value problem) 
\begin{equation}\label{equation}
\begin{gathered}
x''+a_1(t)x'+a_0(t)x=0,\\
x(0)=x(1),\\
x'(0)=x'(1)
\end{gathered}
\end{equation}
is incompatible, i.e. possesses only the trivial solution. 
\end{minipage}
\begin{remark}
Consider the boundary conditions operators $B_1, B_2\colon C^1(I,\R{})\to\R{}$ of the form \[B_1:=\ev_0-\ev_1\;\;\text{and}\;\;B_2:=\ev_0\circ\frac{d}{dt}-\ev_1\circ\frac{d}{dt}.\] Assumption $\mathbf{(G_1)}$ holds iff 
\[\begin{vmatrix}
B_1u_1 & B_1u_2\\
B_2u_1 & B_2u_2
\end{vmatrix}\neq 0,\]
where $u_1,u_2$ is a fundamental system of solutions of a homogeneous linear differential equation: $x''+a_1(t)x'+a_0(t)x=0$ on $I$.
\end{remark}
If assumption $\mathbf{(G_1)}$ is satisfied, then  there exists (only one) so-called influence function $G\colon I\times I\to\R{}$ for the problem \eqref{equation}, in which case the mapping $x\in D_L$, given by \[x(t)=\int_0^1G(t,s)u(s)\,ds\;\;\text{for }t\in I,\] provides a unique solution to the inhomogeneous problem $Lx=u\in L^2(I,E)$. This means that the set $S_{\!F}$ of solutions to periodic problem \eqref{inclusion} coincides with the solution set to the following Hammerstein integral inclusion
\begin{equation}\label{inclusion2}
x(t)\in\int_0^1G(t,s)F(s,x(s))\,ds,\;\;t\in I.
\end{equation}
Denote by $H\colon L^2(I,E)\to\Hi^2(0,1;E)$ the associated Hammerstein integral operator:
\begin{equation}\label{hammerstein}
H(u)(t)=\int_0^1 G(t,s)u(s)ds,\;t\in I.
\end{equation}
\indent We are now in position to state and prove our first main result in the Carath\'eodory case. It provides an insight into the topological structure of the solutions set $S_{\!F}$ of the periodic problem \eqref{inclusion}.
\begin{theorem}\label{solset}
Let $E$ be a reflexive Banach space. Suppose that the multimap $F\colon I\times E\map E$ satisfies assumptions $(\F_1)$-$(\F_5)$. Assume that the spectral radius $r(\hat{H})$ of the related linear operator $\hat{H}\colon C(I,\R{})\to C(I,\R{})$ is less than $1$, where $\hat{H}$ is defined by
\begin{equation}\label{operator}
\hat{H}(u):=2\int_0^1|G(\cdot,s)|\eta(s)u(s)\,ds.
\end{equation}
Suppose further that the right-hand side $F$ is $L^2$-integrably bounded or the constant $m$ in $(\F_4)$ is strictly positive and the following inequality is met
\begin{equation}\label{bound}
m\sup\limits_{t\in I}||G(t,\cdot)||_2<1.
\end{equation}
Then the periodic boundary value problem \eqref{inclusion} possesses a solution. Moreover, the solution set $S_{\!F}$ to problem \eqref{inclusion} is compact in the space $C^1(I,E)$ and weakly compact as a subset of the space $\Hi^2(0,1;E)$.
\end{theorem}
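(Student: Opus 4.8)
The plan is to realise $S_{\!F}$ as the fixed point set of the solution operator $\mathcal{G}:=H\circ N_F$ and to apply Theorem \ref{fix}. By Assumption $\mathbf{(G_1)}$ and the discussion preceding \eqref{hammerstein}, an element $x\in\Hi^2(0,1;E)$ solves \eqref{inclusion} precisely when $x\in H(N_F(x))$, so that $S_{\!F}=\fix(\mathcal{G})$. First I would pin down the ambient ball: from $(\F_4)$ one has $\|y\|\<\sup_{t\in I}\|G(t,\cdot)\|_2\f(\|c\|_2+m\|x\|\g)$ for every $y\in\mathcal{G}(x)$, so whenever $F$ is $L^2$-integrably bounded, or $m>0$ and \eqref{bound} holds, there is $R>0$ with $\mathcal{G}(D)\subset D$ for $D$ the closed ball of radius $R$ in $C(I,E)$; the same estimate forces $S_{\!F}\subset D$. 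Nonemptiness of the values of $N_F$, hence of $\mathcal{G}$, follows from $(\F_1)$, $(\F_2)$, $(\F_4)$ by routine measurability arguments.

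Next I would check that $\mathcal{G}\colon D\map D$ is admissible. Fix $x\in D$: by $(\F_1)$ the set $N_F(x)$ is closed convex, by $(\F_4)$ it is bounded in $L^2(I,E)$, hence --- $E$ being reflexive --- weakly compact; and $(\F_5)$ applied to the singleton $\{x(s)\}$ gives $\beta(F(s,x(s));E)=0$, so every fibre $N_F(x)(s)=F(s,x(s))$ is compact in $E$. Since $H$ carries $L^2$-bounded sets into $C^1$-bounded sets that are equi-Lipschitz in $C(I,E)$ and equi-H\"older-$\tfrac12$ in the derivative, because $(Hu)''=u-a_1(Hu)'-a_0(Hu)$ stays bounded in $L^2$, the bounds $\beta(\mathcal{G}(x)(t);E)\<2\int_0^1|G(t,s)|\beta(N_F(x)(s);E)\,ds=0$ and the analogue for derivatives yield, through Arzel\`a--Ascoli, that $\mathcal{G}(x)=H(N_F(x))$ is relatively compact in $C^1(I,E)$; it is closed, since a weakly convergent subsequence of selections together with the weak--weak continuity of $H$ identifies any $C^1$-limit, and convex. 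For upper semicontinuity I would run a convergence theorem: given $x_n\to x$ and $y_n=H(w_n)$ with $w_n\in N_F(x_n)$, the sequence $(w_n)$ is bounded in $L^2$, $(\F_5)$ makes $\{w_n(s):\n\}$ relatively compact for a.a.\ $s$, a subsequence satisfies $w_n\rightharpoonup w$ in $L^2$, and Mazur's lemma together with the strong--weak closedness $(\F_3)$ of $\Graph(F(s,\cdot))$ places $w$ in $N_F(x)$; as $(y_n)$ is simultaneously relatively compact in $C^1(I,E)$ with $y_n\rightharpoonup H(w)$, some subsequence converges in $C^1(I,E)$ to $H(w)\in\mathcal{G}(x)$. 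Having compact values, $\mathcal{G}$ is therefore usc, hence acyclic, hence admissible. This convergence-theorem step --- where reflexivity, $(\F_3)$, $(\F_4)$ and $(\F_5)$ all have to be combined --- is the part I expect to be the main obstacle.

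Then I would show that $\mathcal{G}$ is condensing with respect to the MNC $\gamma(\Omega):=\f(t\mapsto\beta(\Omega(t);E),\ \modulus_C(\Omega)\g)\in\varmathbb{R}_+^I\times\varmathbb{R}_+$, where $\modulus_C(\Omega)$ is the modulus of equicontinuity of $\Omega$ in $C(I,E)$: this $\gamma$ is monotone, positively subhomogeneous, algebraically semiadditive and, by Arzel\`a--Ascoli, regular on $C(I,E)$, so Theorem \ref{fix} becomes applicable once condensation is verified. If $\Omega\subset D$ and $\gamma(\Omega)\<\gamma(\mathcal{G}(\Omega))$, then writing $\psi(t):=\beta(\Omega(t);E)$ one obtains from $(\F_5)$ that $\psi(t)\<\beta(\mathcal{G}(\Omega)(t);E)\<2\int_0^1|G(t,s)|\eta(s)\psi(s)\,ds=\hat{H}(\psi)(t)$, hence $\psi\<\hat{H}^k(\psi)$ for all $\K$; since $\hat{H}$ is a positive compact operator with $r(\hat{H})<1$, iterating drives the right-hand side to $0$, so $\psi\equiv0$ and each $\Omega(t)$ is relatively compact, while the second coordinate --- $\mathcal{G}(\Omega)$ being equicontinuous --- forces equicontinuity of $\Omega$; Arzel\`a--Ascoli then gives relative compactness of $\Omega$ in $C(I,E)$. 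Theorem \ref{fix} now yields that $S_{\!F}=\fix(\mathcal{G})$ is nonempty and compact in $C(I,E)$.

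Finally I would upgrade the topology. Each $x\in S_{\!F}$ is $H(w)$ with $w\in N_F(x)$ and $\|w\|_2\<\|c\|_2+mR$, so these selections form an $L^2$-bounded family; compactness of $S_{\!F}$ in $C(I,E)$ makes $S_{\!F}(s)$ relatively compact, whence $\beta(\{w(s)\};E)\<\eta(s)\beta(S_{\!F}(s);E)=0$ by $(\F_5)$ and therefore $\beta(\{x'(t):x\in S_{\!F}\};E)\<2\int_0^1|\partial_tG(t,s)|\beta(\{w(s)\};E)\,ds=0$; combined with the uniform H\"older-$\tfrac12$ bound on derivatives and Arzel\`a--Ascoli this gives relative compactness --- hence, by the closedness already in hand, compactness --- of $S_{\!F}$ in $C^1(I,E)$. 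The $L^2$-bounds on $x$, $x'$ and $x''=w-a_1x'-a_0x$ show $S_{\!F}$ is bounded in the reflexive space $\Hi^2(0,1;E)$; since any net in $S_{\!F}$ admits a subnet converging in $C^1(I,E)$ to a point of $S_{\!F}$ and, along a further subnet, weakly in $\Hi^2(0,1;E)$, and since the inclusion $\Hi^2(0,1;E)\hookrightarrow C^1(I,E)$ is weakly continuous, the two limits coincide; thus $S_{\!F}$ is weakly closed and therefore weakly compact in $\Hi^2(0,1;E)$.
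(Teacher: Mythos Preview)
Your strategy coincides with the paper's: realise $S_{\!F}$ as $\fix(H\circ N_F)$, prove admissibility by a weak-convergence/Arzel\`a--Ascoli argument, build an MNC so that the condensing property reduces to $r(\hat{H})<1$, and invoke Theorem~\ref{fix}. Two differences are worth recording, one of them a genuine gap.

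The harmless difference is that you run the fixed-point argument in $C(I,E)$ and upgrade to $C^1(I,E)$ afterwards, whereas the paper works from the start in $C^1(I,E)$ with an MNC that tracks values \emph{and} derivatives simultaneously (hence the four-component measure \eqref{MNC}). Your route is legitimate and a bit lighter; the paper's buys compactness in $C^1$ directly without a separate upgrade step.

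The gap is in your condensing estimate. Your MNC uses the raw function $t\mapsto\beta(\Omega(t);E)$, and you assert
\[
\beta(\mathcal{G}(\Omega)(t);E)\<2\int_0^1|G(t,s)|\eta(s)\beta(\Omega(s);E)\,ds
\]
for an arbitrary bounded $\Omega$. The Heinz--M\"onch integral inequality for the Hausdorff MNC (this is \cite[Cor.~3.1]{heinz} in the paper) is available only for \emph{countable} families of integrands, and the factor $2$ itself arises from passing to a separable subspace $E_t$ containing the ranges (via Pettis' theorem) and comparing $\beta(\,\cdot\,;E_t)$ with $\beta(\,\cdot\,;E)$. For an uncountable $\Omega$ in a nonseparable $E$ neither the measurability of $s\mapsto\beta(N_F(\Omega)(s);E)$ nor the inequality itself is justified. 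The paper's device is to define the MNC as a \emph{maximum over countable subsets} $D\subset\Omega$ of the relevant tuple; the quantities $\psi_{C^1}(\Omega)$ and $\psi_{C^1}(\mathcal{G}(\Omega))$ are then realised on countable families $\{z_n\}$, $\{y_n\}$, one finds the separable $E_t$, and the Heinz estimate \eqref{osz1} applies legitimately to give \eqref{ko}. If you modify your $\gamma$ in the same way, your argument (including the iteration $\psi\<\hat H^k(\psi)\to 0$, which is equivalent to the paper's $(id-\hat H)^{-1}$ reasoning) goes through; as written, the condensing step is unsupported.
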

\begin{proof}
Observe that the solution set of integral inclusion \eqref{inclusion2} corresponds to the set $\fix(\Phi)$ of fixed points of the operator $\Phi\colon C^1(I,E)\map C^1(I,E)$, given by \[\Phi(x):=(H\circ N_F)(x).\] We claim that the multivalued operator $\Phi$ is upper semicontinuous and possesses {non\-empty} compact convex values. Precisely, we will show that if $x_n\to x$ in $C^1(I,E)$ and $y_n\in \Phi(x_n)$, then there is a subsequence $(y_{k_n})_\n$ convergent in $C^1(I,E)$ to $y\in\Phi(x)$. From \cite[Prop.1.]{pietkun} and the fact that $H$ is linear it follows that $\Phi(x)$ is nonempty convex for every $x\in C^1(I,E)$. Let $x_n\to x$ in $C^1(I,E)$ and $y_n\in\Phi(x_n)$. Then $y_n=H(w_n)$ for some $w_n\in N_F(x_n)$. Since the operator $N_F$ is weakly upper semicontinuous (remind yourself Remark \ref{zalozenia}. and apply it in the context of \cite[Prop.1.]{pietkun}), there is a subsequence (again denoted by) $(w_n)_\n$ such that $w_n\rightharpoonup w\in N_F(x)$. \par Let us introduce an auxiliary notation: $\bigtriangleup:=\{(t,s)\in I\times I\colon 0\<s\<t\<1\}$ and $\bigtriangledown:=\{(t,s)\in I\times I\colon 0\<t\<s\<1\}$. Remember that the Green's function $G$ has partial derivatives in $t$ of the order $\<2$ and these derivatives are continuous in each triangle $\bigtriangleup$ and $\bigtriangledown$. Now, fix $\tau\in I$. It is easy to see that
\[\left|\frac{\partial}{\partial t}G(t,s)-\frac{\partial}{\partial t}G(\tau,s)\right|^2\xrightarrow[t\to\tau]{}0\] for every $s\in I$ and
\[\left|\frac{\partial}{\partial t}G(t,s)-\frac{\partial}{\partial t}G(\tau,s)\right|^2\<\left(2\sup_{(\zeta,\xi)\in\bigtriangleup}\left|\frac{\partial}{\partial t}G(\zeta,\xi)\right|+2\sup_{(\zeta,\xi)\in\bigtriangledown}\left|\frac{\partial}{\partial t}G(\zeta,\xi)\right|\right)^2\] for any $(t,s)\in I^2$. This means that the function $I\ni t\mapsto\frac{\partial}{\partial t}G(t,\cdot)\in L^2(I)$ is uniformly continuous. Clearly, the mapping $I\ni t\mapsto G(t,\cdot)\in L^2(I)$ is continuous as well.\par The image $\Phi(\Omega)$ and the derivative $\Phi(\Omega)'$ of this set are equicontinuous for any bounded $\Omega\subset C^1(I,E)$. It follows by the estimations
\begin{align*}
\sup_\n|H(w_n)(t)-H(w_n)(\tau)|&\<\sup_\n\int_0^1|G(t,s)-G(\tau,s)||w_n(s)|\,ds\\&\<||G(t,\cdot)-G(\tau,\cdot)||_2\f(||c||_2+m\sup_\n||x_n||\g)
\end{align*}
and \[\sup_\n\f|\frac{d}{dt}H(w_n)(t)-\frac{d}{dt}H(w_n)(\tau)\g|\<\f\Arrowvert\frac{\partial}{\partial t}G(t,\cdot)-\frac{\partial}{\partial t}G(\tau,\cdot)\g\Arrowvert_2\f(||c||_2+m\sup_\n||x_n||\g).\] In view of the Pettis measurability theorem there exists a closed linear separable subspace $E_t$ of $E$ such that \[\left\{\int\limits_0^1G(t,s)w_n(s)\,ds,\int\limits_0^1\frac{\partial}{\partial t}G(t,s)w_n(s)\,ds,\frac{\partial}{\partial t}G(t,s)w_n(s),G(t,s)w_n(s)\right\}_{n\geqslant 1}\subset E_t\] for a.a. $s\in I$. Under assumption $(\F_5)$ the following inequality is satisfied: 
\begin{align*}
\beta(G(t,s)\{w_n(s)\}_{n\geqslant 1};E_t)&\<\beta(G(t,s)\{w_n(s)\}_{n\geqslant 1};E)=|G(t,s)|\beta(\{w_n(s)\}_{n\geqslant 1};E)\\&\<|G(t,s)|\beta(F(s,\{x_n(s)\}_{n\geqslant 1});E)\<|G(t,s)|\eta(s)\beta(\{x_n(s)\}_{n\geqslant 1};E).
\end{align*}
Applying the latter in the context of \cite[Cor.3.1]{heinz} one obtains, for every $t\in I$
\begin{equation}\label{osz1}
\begin{split}
\beta(\{y_n(t)\}_{n\geqslant 1};E_t)&=\beta\left(\left\{\int_0^1G(t,s)w_n(s)\,ds\right\}_{n\geqslant 1};E_t\right)\<\int_0^1\beta\left(G(t,s)\{w_n(s)\}_{n\geqslant 1};E_t\right)\,ds\\&\<\int_0^1|G(t,s)|\eta(s)\beta\left(\{x_n(s)\}_{n\geqslant 1};E\right)\,ds
\end{split}
\end{equation}
and 
\begin{equation}\label{osz2}
\begin{split}
\beta\left(\{\dot{y}_n(t)\}_{n\geqslant 1};E_t\right)\<\int_0^1\left|\frac{\partial}{\partial t}G(t,s)\right|\eta(s)\beta\left(\{x_n(s)\}_{n\geqslant 1};E\right)\,ds.
\end{split}
\end{equation}
Since $x_n\rightrightarrows x$ on $I$ we infer that $\beta(\{y_n(t)\}_{n\geqslant 1};E_t)=0$ and $\beta(\{y'_n(t)\}_{n\geqslant 1};E_t)=0$ for every $t\in I$. In view of the Ascoli-Arzel\`a theorem the sequence $(y_n)_{n\geqslant 1}$ possesses a subsequence convergent to some function $y$ in the norm of $C^1(I,E)$.\par It is a matter of routine to check that the mapping $H$ is continuous as an operator from $L^2(I,E)$ to $\Hi^2(0,1;E)$. This follows immediately from the estimates:
\begin{align*}
\f\Arrowvert\frac{d^2}{dt^2}H(w_n)-\frac{d^2}{dt^2}H(w)\g\Arrowvert_2&=\f\Arrowvert w_n-w+\int_0^1\frac{\partial^2}{\partial t^2}G(\cdot,s)(w_n(s)-w(s))\,ds\g\Arrowvert_2\\&\<||w_n-w||_2+\f(\int_0^1\f|\int_0^1\frac{\partial^2}{\partial t^2}G(t,s)(w_n(s)-w(s))\,ds\g|^2dt\g)^\frac{1}{2}\\&\<||w_n-w||_2+\f(\int_0^1\f(\int_0^1\f|\frac{\partial^2}{\partial t^2}G(t,s)\g||w_n(s)-w(s)|\,ds\g)^2dt\g)^\frac{1}{2}\\&\<||w_n-w||_2+\f(\int_0^1\f\Arrowvert\frac{\partial^2}{\partial t^2}G(t,\cdot)\g\Arrowvert_2^2||w_n-w||_2^2\,dt\g)^\frac{1}{2}\\&\<||w_n-w||_2\f(\f\Arrowvert\,\f\Arrowvert\frac{\partial^2}{\partial t^2}G(t,\cdot)\g\Arrowvert_2\g\Arrowvert_2+1\g),
\end{align*}
\[\f\Arrowvert\frac{d}{dt}H(w_n)-\frac{d}{dt}H(w)\g\Arrowvert_2\<||w_n-w||_2\f\Arrowvert\,\f\Arrowvert\frac{\partial}{\partial t}G(t,\cdot)\g\Arrowvert_2\g\Arrowvert_2\] and \[||H(w_n)-H(w)||_2\<||w_n-w||_2||\,||G(t,\cdot)||_2||_2.\]
Recall that we have already established: $w_n\rightharpoonup w\in N_F(x)$. Since $H$ is a linear operator, we see that $H(w_n)\rightharpoonup H(w)$ in $C^1(I,E)$. Therefore, $y=H(w)$ and eventually $y\in\mf(x)$. As we have found $\Phi$ is a compact convex valued usc multimap.\par It is worthwhile to observe that instead of the assumption $(\F_4)$ we may use w.l.o.g the following property:
\begin{itemize}
\item[$(\F_4')$] $||F(t,x)||^+\<\hat{c}(t)$ for every $x\in E$ and for a.a. $t\in I$, where $\hat{c}\in L^2(I,\R{}_+)$.
\end{itemize}
Indeed, if $x\in S{\!_F}$, then
\begin{align*}
||x||&\<\sup_{t\in I}\int_0^1|G(t,s)|(c(s)+m|x(s)|)\,ds\<\sup_{t\in I}||G(t,\cdot)||_2(||c||_2+m||x||_2)\\&=\sup_{t\in I}||G(t,\cdot)||_2||c||_2+m\sup_{t\in I}||G(t,\cdot)||_2||x||
\end{align*}
Thus, if \eqref{bound} holds, then 
\[||x||\<\frac{\sup_{t\in I}||G(t,\cdot)||_2||c||_2}{1-m\sup_{t\in I}||G(t,\cdot)||_2},\]
which means that the solution set $S_{\!F}$ is bounded as a subset of $C(I,E)$. Now, if we denote by $r\colon E\to D(0,M)$ the radial retraction onto the closed ball $D_C(0,M)$ such that $S_{\!F}\subset D_C(0,M)\subset C(I,E)$, then the solution set $S_{\!\hat{F}}$ to the integral inclusion
\[x\in(H\circ N_{\!\hat{F}})(x),\]
where the set-valued map $\hat{F}\colon I\times E\map E$ is such that $\hat{F}(t,x)=F(t,r(x))$, coincides with the set $S_{\!F}$. Evidently, the map $\hat{F}$ satisfies assumptions $(\F_1)$, $(\F_2)$ and $(\F_4')$ with $\hat{c}(\cdot):=c(\cdot)+mM$. Note that the radial retraction is Lipschitz and $1$-$\beta$-contractive. Therefore, the map $\hat{F}$ satisfies both condition $(\F_3)$ and $(\F_5)$ (in fact, $\hat{F}$ is weakly upper semicontinuous).\par Notice that the operator $\mf$ is bounded. Actually, if $y\in\mf(x)$ for some $x\in C^1(I,E)$, then
\begin{equation}\label{intbound}
\begin{split}
||y||_{C^1}&=||y||+||\dot{y}||\<R:=||\hat{c}||_2\f(\sup_{t\in I}||G(t,\cdot)||_2+\sup_{t\in I}\f\Arrowvert\frac{\partial}{\partial t}G(t,\cdot)\g\Arrowvert_2\g),
\end{split}
\end{equation}
where $\hat{c}$ is the integral bound of $F$ under the assumption $(\F_4')$. Therefore the inclusion $\mf\f(D_{C^1}(0,R)\g)\subset D_{C^1}(0,R)\subset C^1(I,E)$ does not require a comment. \par Recall that for $\Omega$ bounded in $C(I,E)$ the expression \[[\psi_C^1(\Omega)](t):=\beta(\Omega(t);E),\] where $\Omega(t):=\{x(t)\colon x\in\Omega\}$, defines a MNC on the space $C(I,E)$ (\cite[Ex.1.2.4.]{sadovski}). In this space the formula of the modulus of equicontinuity of the set of functions $\Omega\subset C(I,E)$ has the following form \[\modulus_C(\Omega):=\lim\limits_{\delta\to 0^+}\sup\limits_{x\in\Omega}\max\limits_{|t-\tau|\<\delta}|x(t)-x(\tau)|.\] It defines a MNC on $C(I,E)$ as well (\cite[Ex.2.1.2]{obu}). Furthermore, if $\modulus_C(\Omega)=0$, then $\psi_C^1(\Omega)\in C(I,\R{}_+)$. Let us define a set function $\psi_{C^1}\colon{\mathcal B}\f(C^1(I,E)\g)\to\varmathbb{R}^I_+\times\varmathbb{R}^I_+\times\R{2}_+$ in the following way
\begin{equation}\label{MNC}
\psi_{C^1}(\Omega):=\max_{D\in\Delta(\Omega)}\f(\psi_C^1(D),\psi_C^1(D'),\modulus_C(D),\modulus_C(D')\g),
\end{equation}
where $D':=\{x'\colon x\in D\}$, $\Delta(\Omega)$ stands for the family of countable subsets of $\Omega$ and the product $\varmathbb{R}^I_+\times\varmathbb{R}_+^I\times\R{2}_+$ is provided with the usual point- and component-wise order relation. It is a matter of routine to check that $\psi_{C^1}$ is a monotone, positively homogeneous, algebraically semiadditive and invariant with respect to union with compact sets MNC on the space $C^1(I,E)$. What is most important, this measure is regular, which follows directly from Ascoli-Arzel\`a theorem. We claim that the set-valued operator $\mf\colon D(0,R)\map D(0,R)$ is condensing relative to MNC $\psi_{C^1}$.\par Let $\Omega$ be a bounded subset of $C^1(I,E)$ for which the inequality holds
\begin{equation}\label{cond}
\psi_{C^1}(\Omega)\<\psi_{C^1}(\mf(\Omega)).
\end{equation}
Suppose that quantities $\psi_{C^1}(\Omega)$ and $\psi_{C^1}(\mf(\Omega))$ are attained respectively on denumerable subsets $\{z_n\}_\n$ and $\{y_n\}_\n$. There are also functions $x_n\in\Omega$ and $w_n\in N_F(x_n)$ such that $y_n=H(w_n)$. In accordance with \eqref{osz1}, \eqref{osz2} and \eqref{cond}, we have
\begin{equation}\label{ko}
\begin{aligned}
\f[\psi_C^1\f(\{z_n\}_{n\geqslant 1}\g)\g](t)&\!\<\!\f[\psi_C^1\f(\{y_n\}_{n\geqslant 1}\g)\g](t)\!\<\!2\beta(\{y_n(t)\}_{n\geqslant 1};E_t)\<2\!\!\int\limits_0^1\!|G(t,s)|\eta(s)\beta(\{x_n(s)\}_{n\geqslant 1};E)\,ds\\&\<2\int_0^1|G(t,s)|\eta(s)\f[\psi_C^1\f(\{z_n\}_{n\geqslant 1}\g)\g](s)\,ds
\end{aligned}
\end{equation}
and
\begin{equation}\label{ko2}
\begin{split}
\f[\psi_C^1\f(\{z'_n\}_{n\geqslant 1}\g)\g](t)&\<\f[\psi_C^1\f(\{y'_n\}_{n\geqslant 1}\g)\g](t)\<2\beta(\{\dot{y}_n(t)\}_{n\geqslant 1};E_t)\\&\<2\int_0^1\f|\frac{\partial}{\partial t}G(t,s)\g|\eta(s)\beta(\{x_n(s)\}_{n\geqslant 1};E)\,ds\\&\<2\int_0^1\f|\frac{\partial}{\partial t}G(t,s)\g|\eta(s)\f[\psi_C^1\f(\{z_n\}_{n\geqslant 1}\g)\g](s)\,ds.
\end{split}
\end{equation}
On the other hand it is true that \[\modulus_C\f(\{z_n\}_\n\g)\<\modulus_C\f(\{y_n\}_\n\g)=0\;\;\text{and}\;\;\modulus_C\f(\{z'_n\}_\n\g)\<\modulus_C\f(\{y'_n\}_\n\g)=0.\]
Thus, $\psi_C^1\f(\{z_n\}_\n\g)\in C(I,\R{}_+)$ and $\f(id-\hat{H}\g)\f(\psi_C^1\f(\{z_n\}_\n\g)\g)\<0$, by \eqref{ko}. Since the spectral radius $r(\hat{H})<1$, we infer that $\psi_C^1\f(\{z_n\}_\n\g)\<\f(id-\hat{H}\g)^{-1}(0)=0$ (more details the reader may find, for example in \cite[Prop.1.2]{monch} or in \cite[p.168-169]{deimling2}). Therefore, \eqref{ko2} entails $\psi_C^1\f(\{z'_n\}_\n\g)=0$. Eventually, $\psi_{C^1}\f(\{z_n\}_\n\g)=0\in\varmathbb{R}^I_+\times\varmathbb{R}_+^I\times\R{2}_+$, which means that $\Omega$ is relatively compact in $C^1(I,E)$ and $\mf$ is a $\psi_{C^1}$-condensing operator.\par The preceding considerations indicate that the operator $\mf\colon D_{C^1}(0,R)\map D_{C^1}(0,R)$ is an admissible $\psi_{C^1}$-condensing set-valued map, allowing us to use Theorem \ref{fix}. Consequently, the solution set $S_{\!F}$ is nonempty and compact in the space $C^1(I,E)$.\par The weak compactness of $\fix(\Phi)$ in the space $\Hi^2(0,1;E)$ is not particularly sophisticated issue. If $(x_n)_\n$ is a sequence of fixed points of the operator $\Phi$, then we know already that, passing to a subsequence if necessary, $x_n\to x$ in $C^1(I,E)$. On the other hand, $w_n\rightharpoonup w$ in $L^2(I,E)$, where $Lx_n=w_n\in N_F(x_n)$. Furthermore, $w\in N_F(x)$. As we have shown the Hammerstein operator $H$ is continuous. That's why $H(w_n)\rightharpoonup H(w)$ in $\Hi^2(0,1;E)$. At the same time $x_n=H(w_n)\rightharpoonup H(w)$ and $x_n\rightharpoonup x$ in $C^1(I,E)$. Therefore, $x_n\rightharpoonup x$ in $\Hi^2(0,1;E)$, where $x=H(w)\in H\circ N_F(x)$, i.e. $x\in\fix(\Phi)$.    
\end{proof}
\begin{remark}\label{rem1}
Assumptions of Theorem \ref{solset}. allow to reformulate its thesis in the context of the problem \eqref{inclusion2}. Introducing minor adjustments in presented above reasoning one can easily show that the integral inclusion
\[x(t)\in h(t)+\int_0^1G(t,s)F(s,x(s))\,ds,\;\;\;t\in I\] possesses at least one solution for each fixed inhomogeneity $h\in C^1(I,E)$.
\end{remark}
\begin{remark}\label{rem2}
Instead of assuming that inequality $r(\hat{H})<1$ holds, we might as well assume that the following condition is satisfied$:$
\begin{equation}\label{nierzsup}
2\sup\limits_{t\in I}||G(t,\cdot)||_2||\eta||_2<1.
\end{equation}
Assumption \eqref{nierzsup} allows us to use \cite[Th.10.]{pietkun}. In view of this result, there exists a continuous solution to the integral inclusion \eqref{inclusion2}. Properties of the integral kernel $G$ imply affiliation of this solution to the subspace $\Hi^2(0,1;E)$. Therefore, the non-emptiness of the solution set $S_{\!F}$ of the problem \eqref{inclusion} also follows from condition \eqref{nierzsup}, although the use of this assumption does not strengthen the thesis of Theorem \ref{solset}. $($as indicated by Example \ref{Ex}.$)$.
\end{remark}
\begin{example}\label{Ex}
Consider \eqref{inclusion} with $a_1\equiv 0$ and $a_0\equiv -1$. The Green's function for the completely homogeneous boundary value problem
\begin{equation}\label{reduced}
\begin{gathered}
x''-x=0,\\
x(0)=x(1),\\
x'(0)=x'(1).
\end{gathered}
\end{equation}
has the following form
\begin{equation}\label{green}
G(t,s):=
\begin{cases}
{\displaystyle \frac{1}{2}\f(\frac{1}{1-e}e^{s-t}+\frac{e}{1-e}e^{t-s}\right)}&\text{for }\,0\<t\<s\<1,\\
\mbox{}&\\
{\displaystyle \frac{1}{2}\f(\frac{e}{1-e}e^{s-t}+\frac{1}{1-e}e^{t-s}\right)}&\text{for }\,0\<s\<t\<1.\\
\end{cases}
\end{equation}
The linear operator $\hat{H}$ from \eqref{operator} is compact and maps the convex cone $K:=C(I,\R{}_+)$ into itself. Obviously, $K-K=C(I,\R{})$. In this case, the Krein-Rutman theorem asserts that $r(\hat{H})=\max\{|\lambdaup|\colon\lambdaup\in\sigma_p(\hat{H})\}$, where $\sigma_p(\hat{H})$ is the point spectrum of $\hat{H}$. The equation $\hat{H}(u)=\lambdaup u$ is equivalent to $u=H\f(-\frac{2}{\lambdaup}\eta u\g)$, i.e.
\[u(t)=-\frac{2}{\lambdaup}\int_0^1G(t,s)\eta(s)u(s)\,ds,\;\;\;t\in I.\] The latter means that
\begin{equation}\label{przyklad3}
\begin{gathered}
u''-u=-\frac{2}{\lambdaup}\eta u,\\
u(0)=u(1),\\
u'(0)=u'(1).
\end{gathered}
\end{equation}
The principal assertion of the spectral theory of the periodic Sturm-Liouville problems says that $\lambdaup^{-1}$ is an eigenvalue associated with \eqref{przyklad3} iff $\lambdaup^{-1}$ is a root of $D(\lambdaup^{-1}):=u_1(1)+u'_2(1)=2$, where $D(\lambdaup)$ is the so-called Hill discriminant and $u_1,u_2$ are two independent solutions of differential equation $u''-u=-\frac{2}{\lambdaup}\eta u$ with initial conditions
\[\begin{cases}
u_1(0)=1&\\
u_1'(0)=0&
\end{cases}\;\;\;\text{and}\;\;\;
\begin{cases}
u_2(0)=0&\\
u_2'(0)=1&
\end{cases}\]
$($see \cite{eastham}$)$. In case $\eta$ is constant the problem \eqref{przyklad3} has a nontrivial solution only if $\lambdaup^{-1}=\frac{1}{2\eta}$. Hence, $r(\hat{H})=2\eta$. Therefore, the inequality $r(\hat{H})<1$ is met iff $2\eta<1$. On the other hand, condition \eqref{nierzsup} assumes the form $2\sup_{t\in I}||G(t,\cdot)||_2\eta<1$. A straightforward calculation shows that
\begin{align*}
\left(\sup_{t\in I}||G(t,\cdot)||_2\right)^2&=\frac{1}{2}\sup_{t\in I}\left(\frac{1}{4}\frac{e^2}{(1-e)^2}(1-e^{-2t})-\frac{1}{4}\frac{1}{(1-e)^2}(1-e^{2t})+\frac{e}{(1-e)^2}t\g.\\&+\f.\frac{1}{4}\frac{1}{(1-e)^2}(e^{2-2t}-1)-\frac{1}{4}\frac{e^2}{(1-e)^2}(e^{-2+2t}-1)+\frac{e}{(1-e)^2}(1-t)\g)\\&=\frac{e^2+2e-1}{4(e-1)^2},
\end{align*}
i.e. $\sup_{t\in I}||G(t,\cdot)||_2\approx 1.00066$. This leads to the conclusion that assumption $r(\hat{H})<1$ is slightly subtler than condition \eqref{nierzsup}. This should not be surprising, given that $r(\hat{H})\<||\hat{H}||_{\mathcal L}$.
\end{example}

\begin{example}
Consider the following inhomogeneous linear periodic boundary value problem:
\begin{equation}\label{przyklad}
\begin{gathered}
x''-x'-x\in F(t,x),\;\text{a.e. on } I,\\
x(0)=x(1),\\
x'(0)=x'(1).
\end{gathered}
\end{equation}
Since the respective completely homogeneous boundary value problem possesses only the trivial solution, the periodic problem \eqref{przyklad} is equivalent to the integral inclusion \[x(t)\in\int_0^1G(t,s)F(s,x(s))\,ds,\;\;t\in I,\] where the Green's function $G\colon I^2\to\R{}$ is given by
\begin{equation*}
G(t,s):=
\begin{cases}
{\displaystyle \frac{1}{\lambdaup_2-\lambdaup_1}\left(\frac{-e^{\lambdaup_1}}{1-e^{\lambdaup_1}}e^{\lambdaup_1(t-s)}+\frac{e^{\lambdaup_2}}{1-e^{\lambdaup_2}}e^{\lambdaup_2(t-s)}\right)}&\text{for }\,0\<t\<s\<1,\\
\mbox{}&\\
{\displaystyle\frac{1}{\lambdaup_2-\lambdaup_1}\left(\frac{-1}{1-e^{\lambdaup_1}}e^{\lambdaup_1(t-s)}+\frac{1}{1-e^{\lambdaup_2}}e^{\lambdaup_2(t-s)}\right)}&\text{for }\,0\<s\<t\<1.\\
\end{cases}
\end{equation*}
and $\lambdaup_1$, $\lambdaup_2$ are the roots of the respective characteristic equation, i.e. $\lambdaup_1:=\frac{1}{2}(\sqrt{5}+1)$, $\lambdaup_2:=-\frac{1}{2}(\sqrt{5}-1)$. It is not difficult to calculate that
\begin{align*}
\left(\sup_{t\in I}||G(t,\cdot)||_2\right)^2&=\sup_{t\in I}||G(t,\cdot)||_2^2=\sup_{t\in I}\left(\int_0^t|G(t,s)|^2\,ds+\int_t^1|G(t,s)|^2\,ds\right)\\&=\frac{1}{5}\sup_{t\in I}\left(\frac{2}{(1-e^{\lambdaup_1})(1-e^{\lambdaup_2})}\left(1-e^{(\lambdaup_1+\lambdaup_2)t}\right)+\frac{1}{2\lambdaup_1}\left(\frac{1}{1-e^{\lambdaup_1}}\right)^2\left(e^{2\lambdaup_1t}-1\right)\right.\\&+\frac{1}{2\lambdaup_2}\left(\frac{1}{1-e^{\lambdaup_2}}\right)^2\left(e^{2\lambdaup_2t}-1\right)+\frac{2e^{\lambdaup_1}e^{\lambdaup_2}}{(1-e^{\lambdaup_1})(1-e^{\lambdaup_2})}\left(e^{(\lambdaup_1+\lambdaup_2)(t-1)}-1\right)\\&\left.+\frac{1}{2\lambdaup_1}\left(\frac{e^{\lambdaup_1}}{1-e^{\lambdaup_1}}\right)^2\left(1-e^{2\lambdaup_1(t-1)}\right)+\frac{1}{2\lambdaup_2}\left(\frac{e^{\lambdaup_2}}{1-e^{\lambdaup_2}}\right)^2\left(1-e^{2\lambdaup_2(t-1)}\right)\right)\\&\approx 1.0013
\end{align*}
Consequently, $\sup_{t\in I}||G(t,\cdot)||_2\approx 1.00065$. In this way one obtains an upper bound for constant $m$ appearing in assumption \eqref{bound}{\em :} \[m<\left(\sup_{t\in I}||G(t,\cdot)||_2\right)^{-1}\approx 0.999\]
\end{example}

\begin{corollary}
Let $F\colon I\times\R{N}\map\R{N}$ satisfy $(\F_1)$-$(\F_4)$. Assume also that \eqref{bound} holds. Then the thesis of Theorem \ref{solset}. remains in force. Moreover, the set-valued map $\mf\colon\Hi^2\f(0,1;\R{N}\g)\map\Hi^2\f(0,1;\R{N}\g)$ is weakly sequentially to weakly sequentially upper semicontinuous. \end{corollary}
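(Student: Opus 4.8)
The plan is to obtain the first assertion by specialising Theorem \ref{solset} to the finite-dimensional target space, and the second one from the compactness of the embedding $\Hi^2(0,1;\R{N})\hookrightarrow C^1(I,\R{N})$ together with the weak upper semicontinuity of the Nemytski\v{\i} operator $N_F$. First I would observe that when $E=\R{N}$ every bounded subset is relatively compact, so the Hausdorff measure of noncompactness vanishes: $\beta(\Omega;\R{N})=0$ for every bounded $\Omega\subset\R{N}$. Therefore $(\F_5)$ is automatically fulfilled with $\eta\equiv 0$, the operator $\hat{H}$ of \eqref{operator} is identically zero, and $r(\hat{H})=0<1$. Since $\R{N}$ is reflexive, the hypotheses $(\F_1)$-$(\F_4)$ are assumed, and \eqref{bound} supplies exactly the growth restriction required by Theorem \ref{solset} when the constant $m$ in $(\F_4)$ is positive (while $m=0$ simply means that $F$ is $L^2$-integrably bounded, the alternative admitted there), all assumptions of Theorem \ref{solset} are in force. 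Hence $S_{\!F}$ is nonempty, compact in $C^1(I,\R{N})$ and weakly compact in $\Hi^2(0,1;\R{N})$.

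For the upper semicontinuity statement I would first record that the embedding $\Hi^2(0,1;\R{N})\hookrightarrow C^1(I,\R{N})$ is compact: on a bounded subset of $W^{2,2}$ the functions and their derivatives are uniformly bounded and — since $u'$ is absolutely continuous with $|u'(t)-u'(\tau)|\<||u''||_2|t-\tau|^{\frac{1}{2}}$ — equicontinuous, with values in fixed bounded (hence relatively compact) subsets of $\R{N}$, so the Ascoli-Arzel\`a theorem applies. Consequently a compact linear operator carries weakly convergent sequences into norm convergent ones, and $x_n\rightharpoonup x$ in $\Hi^2(0,1;\R{N})$ implies $x_n\to x$ in $C^1(I,\R{N})$. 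Now take $y_n\in\mf(x_n)$ and write $y_n=H(w_n)$ with $w_n\in N_F(x_n)$. As $(x_n)_\n$ is bounded in $C(I,\R{N})$, assumption $(\F_4)$ gives $|w_n(t)|\<c(t)+m\sup_\n||x_n||$ for a.a. $t\in I$, so $(w_n)_\n$ is bounded in the Hilbert space $L^2(I,\R{N})$ and, along a subsequence, $w_n\rightharpoonup w$ in $L^2(I,\R{N})$. By Remark \ref{zalozenia} (applicable since $\R{N}$ is reflexive) $F(t,\cdot)$ is weakly upper semicontinuous with weakly compact values, whence \cite[Prop.1.]{pietkun} shows that $N_F$ is weakly upper semicontinuous and $w\in N_F(x)$. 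Since $H$ is a continuous linear operator from $L^2(I,\R{N})$ into $\Hi^2(0,1;\R{N})$, it follows that $y_n=H(w_n)\rightharpoonup H(w)=:y$ in $\Hi^2(0,1;\R{N})$, with $y\in H(N_F(x))=\mf(x)$. This is precisely the weakly sequentially to weakly sequentially upper semicontinuity of $\mf$.

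The only point that needs a little attention is the compact-embedding claim $x_n\rightharpoonup x$ in $\Hi^2(0,1;\R{N})\Rightarrow x_n\to x$ in $C^1(I,\R{N})$, i.e. making precise that finite-dimensionality of the target restores the pointwise relative compactness that fails in the general Banach setting of Theorem \ref{solset}; once that is in place, the remainder merely reruns — with the weak topology now carried on the domain — the growth estimates and the weak-closedness arguments already deployed in the proof of Theorem \ref{solset}.
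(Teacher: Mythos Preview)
Your argument is correct. The reduction of the first assertion to Theorem \ref{solset} via $\eta\equiv 0$ and $r(\hat{H})=0$ is exactly what the paper has in mind (it leaves this implicit, saying only that ``a commentary requires only the continuity of the operator $\Phi$'').

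For the weak--weak sequential upper semicontinuity of $\Phi$ the two arguments diverge slightly in the tool used to pass from weak $\Hi^2$-convergence to enough pointwise information about $(x_n)$. You invoke the compact embedding $\Hi^2(0,1;\R{N})\hookrightarrow C^1(I,\R{N})$, obtaining $x_n\to x$ uniformly (with derivatives), and then quote the weak upper semicontinuity of $N_F$ from \cite[Prop.~1]{pietkun} exactly as in the proof of Theorem \ref{solset}. The paper instead descends only to $W^{1,2}$, uses the (weaker) compact embedding $W^{1,2}\hookrightarrow L^1$ to get $x_n\to x$ in $L^1$ and hence a.e.\ along a subsequence, and then appeals to the Convergence Theorem \cite[Th.~2]{pietkun} to place the weak limit $w$ in $N_F(x)$. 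Your route is a bit more direct and yields a stronger intermediate conclusion; the paper's route is more economical in that it only needs a.e.\ convergence of $x_n$, which is all the Convergence Theorem requires. Either way the conclusion $y_n=H(w_n)\rightharpoonup H(w)\in\Phi(x)$ in $\Hi^2$ follows from the linearity and continuity of $H$, and both proofs close in the same manner.
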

\begin{proof}
A commentary requires only the continuity of the operator $\Phi$. Suppose that $x_n\rightharpoonup x$ in $\Hi^2\f(0,1;\R{N}\g)$. Since $W^{1,2}\f(0,1;\R{N}\g)^*\subset W^{2,2}\f(0,1;\R{N}\g)^*$, the sequence $(x_n)_\n$ converges weakly in $W^{1,2}\f(0,1;\R{N}\g)$ ass well. We shall invoke the following convenient property of the weak convergence in the Sobolev space $W^{1,2}\f(0,1;\R{N}\g)$: \[x_n\rightharpoonup x\;\;\text{in }W^{1,2}\f(0,1;\R{N}\g)\Rightarrow 
\begin{cases}
x_n\to x\;\;\text{in }L^1\f(I,\R{N}\g),&\\
\dot{x}_n\rightharpoonup\dot{x}\;\;\text{in }L^2\f(I,\R{N}\g).
\end{cases}\]
Hence, passing to a subsequence if necessary, we may assume that $x_n(t)\to x(t)$ a.e. on $I$. Now, take $y_n\in\Phi(x_n)$. Then $y_n=H(w_n)$ for some $w_n\in N_F(x_n)$. In view of the Eberlein-\v{S}mulian theorem, there is a subsequence (again denoted by) $(w_n)_\n$ converging weakly to some $w$ in $L^2(I;\R{N})$. By the Convergence Theorem (see \cite[Th.2.]{pietkun}) we get $w(t)\in F(t,x(t))$ a.e. on $I$. Hence, $y_n=H(w_n)\rightharpoonup y=H(w)\in\Phi(x)$ in $\Hi^2\f(0,1;\R{N}\g)$. This means that the preimage $\Phi^{-1}(A)$ is weakly sequentially closed for all weakly sequentially closed $A\subset\Hi^2\f(0,1;\R{N}\g)$.
\end{proof}
\begin{corollary}\label{solset2}
Let $E$ be a separable Banach space. Suppose that the closed valued multimap $F\colon I\times E\map E$ is lower Carath\'eodory and satisfies assumptions $(\F_4)$-$(\F_5)$. Suppose further that the spectral radius $r(\hat{H})$ of the linear operator $\hat{H}$ from \eqref{operator} is less than $1$ and the inequality \eqref{bound} holds. Then the periodic boundary value problem \eqref{inclusion} possesses at least one solution.
\end{corollary}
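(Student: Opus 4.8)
The plan is to reduce the lower Carath\'eodory case to a single-valued fixed point problem by extracting a continuous branch of the Nemytski\v{\i} operator, since the convexity and upper semicontinuity of $F$ exploited in Theorem \ref{solset} are no longer at our disposal. First I would check that $N_F$, now regarded as a map from $C^1(I,E)$ into the family of closed decomposable subsets of $L^2(I,E)$, has nonempty values and is lower semicontinuous: nonemptiness follows because, $E$ being separable and $F$ being ${\mathcal L}(I)\otimes{\mathcal B}(E)$-measurable, the multifunction $t\mapsto F(t,x(t))$ is measurable and, by $(\F_4)$, admits an $L^2$-integrable selection; decomposability is immediate; and lower semicontinuity is the usual consequence of the lower semicontinuity of $F(t,\cdot)$ together with the growth bound. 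Then the continuous selection theorem for lower semicontinuous maps with closed decomposable values (Fryszkowski, Bressan--Colombo) furnishes a continuous $f\colon C^1(I,E)\to L^2(I,E)$ with $f(x)\in N_F(x)$ for every $x$, so that each fixed point of the continuous single-valued operator $H\circ f$ solves the Hammerstein inclusion \eqref{inclusion2} and hence \eqref{inclusion}.

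Next I would obtain an a priori bound on solutions exactly as in the proof of Theorem \ref{solset}: inequality \eqref{bound} confines every solution of \eqref{inclusion2} to a ball $D_C(0,M)\subset C(I,E)$, and precomposing $F$ with the radial retraction onto that ball leaves the solution set unchanged while reducing us to the integrably bounded hypothesis $(\F_4')$; the estimate \eqref{intbound} then shows that $H\circ f$ maps $D_{C^1}(0,R)$ into itself. The condensing step carries over verbatim: the estimates \eqref{osz1}, \eqref{osz2} and the argument following \eqref{cond} rely only on $(\F_5)$, on $r(\hat H)<1$, and on the regularity properties of the Green's function $G$ --- none of which involved convexity or upper semicontinuity of $F$ --- so that $H\circ f\colon D_{C^1}(0,R)\to D_{C^1}(0,R)$ is $\psi_{C^1}$-condensing. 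A continuous condensing self-map of a closed bounded convex set has a fixed point (the single-valued Darbo--Sadovskii theorem, a particular case of Theorem \ref{fix}), and that fixed point is the sought solution.

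The main obstacle I anticipate is the first step: verifying that $N_F$ genuinely satisfies the hypotheses of the decomposable-valued selection theorem in the $L^2$ setting rather than the classical $L^1$ one --- that is, that the $L^2$-growth condition $(\F_4)$ (resp. $(\F_4')$) suffices to guarantee both nonemptiness of $N_F(x)$ and its lower semicontinuity with values closed in the $L^2$-norm, and that the theorem applies with the separable metric space $C^1(I,E)$ as parameter space. Everything downstream --- the a priori bound, the self-map property, the measure-of-noncompactness estimate governed by $r(\hat H)<1$, and the final fixed point argument --- is a routine transcription of the corresponding parts of the proof of Theorem \ref{solset}.
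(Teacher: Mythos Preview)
Your proposal is correct and follows essentially the same route as the paper: select continuously from $N_F$ via the Bressan--Colombo theorem, then feed the resulting single-valued operator $H\circ n_F$ into the condensing fixed-point machinery already set up in the proof of Theorem~\ref{solset}. For the step you flagged as the main obstacle, the paper argues as follows: it first invokes \cite{ponosov} to get lower semicontinuity of $N_F$ as a map between spaces $S(I,E)$ of measurable functions with the topology of convergence in measure, and then upgrades this to lower semicontinuity into $L^2(I,E)$ by noting that $C^1$-convergence implies convergence in measure, extracting a.e.\ convergent subsequences, and using the $L^2$-integrable bound from $(\F_4)$ together with dominated convergence.
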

\begin{proof}
Denote by $S(I,E)$ the space of all $({\mathcal L}(I),{\mathcal B}(E))$-measurable functions mapping $I$ to $E$, equiped with the topology of convergence in measure. By virtue of \cite[p.731]{ponosov} the Nemytski\v{\i} operator $N_F\colon S(I,E)\map S(I,E)$ is lower semicontinuous. Consider a sequence $(x_n)_\n$ such that $||x_n-x||_{C^1}\to 0$. In particular, $x_n\to x$ in measure. If $y$ is an arbitrary element of $N_F(x)$, then there exists a sequence $y_n\in N_F(x_n)$ such that $y_n\to y$ in $S(I,E)$. From every subsequence of $(y_n)_\n$ we can extract some subsequence $(y_{k_n})_\n$ satisfying $y_{k_n}(t)\to y(t)$ a.e. on $I$. This subsequence must be $L^2$-integrably bounded in view of $(\F_4)$. Thus, $y_{k_n}\to y$ in $L^2(I,E)$. Bearing in mind that $S(I,E)$ is metrizable, we infer that $||y_n-y||_2\to 0$. Therefore, the multimap $N_F\colon C^1(I,E)\map L^2(I,E)$ meets the definition of lower semicontinuity. It is also clear that this operator possesses closed and decomposable values. In view of \cite[Th.3.]{brescol} there exists a continuous $n_F\colon C^1(I,E)\to L^2(I,E)$ such that $n_F(x)\in N_F(x)$ for every $x\in C^1(I,E)$. In particular, for every bounded $\Omega\subset C^1(I,E)$ and for a.a. $t\in I$ we have $\beta\f(n_F(\Omega)(t)\g)\<\eta(t)\beta(\Omega(t))$. Let $R>0$ be given by \eqref{intbound}. One can easily see that the operator $H\circ n_F\colon D_{C^1}(0,R)\to D_{C^1}(0,R)$ is continuous and condensing relative to MNC $\psi_{C^1}$ (the justification is completely analogous to the arguments contained in the proof of Theorem \ref{solset}.). Consequently, this operator possesses a fixed point. Since $\fix(H\circ n_F)\subset\fix(H\circ N_F)$, the solution set $S_{\!F}$ must be nonempty.
\end{proof}
\indent In order to show that the solution set $S_{\!F}$ possesses acyclic structure we shall use the trick based on the strong accretivity of the sum $-L+N_{F_n}$, where $N_{F_n}$ is the Nemytski\v{\i} operator induced by an appropriately chosen approximation $F_n$ of the right-hand side of the differential inclusion. Therefore, in what follows we will demonstrate the dissipativity of the operator $L$.  
\begin{lemma}\label{L}
Let $E^*$ be a strictly convex Banach space. The linear differential operator $L$ is dissipative.
\end{lemma}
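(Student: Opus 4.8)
The plan is to show that the operator $-L$ is accretive on its domain $D_L$, which is exactly the statement that $L$ is dissipative. So I would fix $x,y\in D_L$, put $z:=x-y\in D_L$, and estimate $\langle Lx-Ly,x-y\rangle_+ = \langle Lz,z\rangle_+$ (using linearity of $L$), aiming to show this is $\leqslant 0$. The key is to handle the three terms coming from $Lz=z''+a_1 z'+a_0 z$ separately. Because $E^*$ is strictly convex, the duality map $J$ is single-valued, so $\langle u,v\rangle_+ = \langle u,v\rangle_- = (Jv)(u)$ is an honest bilinear-in-the-first-slot pairing, and moreover $t\mapsto |z(t)|^2$ is differentiable a.e. on $I$ with $\frac{d}{dt}\tfrac12|z(t)|^2 = \langle z'(t),z(t)\rangle_+$ (this is the standard fact, e.g. from \cite{barbu,deimling2}, valid since $z\in W^{1,2}$ and $E^*$ strictly convex).

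First I would integrate the middle and outer terms. For the lowest-order term, since $a_0(t)\leqslant 0$ by assumption $\mathbf{(G_1)}$, we get $\langle a_0(t)z(t),z(t)\rangle_+ = a_0(t)|z(t)|^2 \leqslant 0$ pointwise a.e., so this term is harmless. For the first-derivative term I would use $\langle a_1(t)z'(t),z(t)\rangle_+ = a_1(t)\langle z'(t),z(t)\rangle_+ = \tfrac12 a_1(t)\frac{d}{dt}|z(t)|^2$; this is the term one cannot control pointwise, so it must be absorbed by the second-order term after an integration by parts. For the second-order term, the idea is that $\langle z''(t),z(t)\rangle_+$ integrates (against a suitable procedure) to something involving $-\int|z'|^2$ plus boundary terms, and the periodic boundary conditions $z(0)=z(1)$, $z'(0)=z'(1)$ built into $D_L$ kill exactly those boundary terms.

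More precisely, the main technical step is to justify, for $z\in\Hi^2(0,1;E)$ with $E^*$ strictly convex, the identity (or inequality)
\[
\int_0^1\langle z''(t),z(t)\rangle_+\,dt \;\leqslant\; \bigl[\langle z'(t),z(t)\rangle_+\bigr]_0^1 - \int_0^1 |z'(t)|^2\,dt,
\]
which comes from differentiating $t\mapsto\langle z'(t),z(t)\rangle_+$ — using that $s\mapsto|z'(t)|^2$ is the square norm and that $Jz$ being a subgradient of $\tfrac12|\cdot|^2$ gives $\langle z'(t),z(t)\rangle_+ - \langle z'(\tau),z(\tau)\rangle_+ \geqslant \langle z'(t)-z'(\tau),z(\tau)\rangle_+$, hence an a.e. lower bound $\tfrac{d}{dt}\langle z'(t),z(t)\rangle_+ \geqslant \langle z''(t),z(t)\rangle_+ + |z'(t)|^2$; integrating and using that the left side telescopes gives the displayed bound. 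Similarly $\int_0^1 a_1(t)\langle z'(t),z(t)\rangle_+\,dt = \tfrac12\int_0^1 a_1(t)\frac{d}{dt}|z(t)|^2\,dt$, and after integration by parts this produces boundary terms $\tfrac12[a_1(t)|z(t)|^2]_0^1$ plus $-\tfrac12\int a_1'(t)|z(t)|^2\,dt$ — wait, $a_1$ is only assumed continuous, not $C^1$, so I would instead keep this term as is and not integrate it by parts; rather I expect the intended argument handles $a_1$ directly. Let me reconsider: the cleanest route is to note $\frac{d}{dt}\bigl(\tfrac12 e^{A_1(t)}|z(t)|^2\bigr)$ type manipulations, where $A_1(t)=\int_0^t a_1$, turning $Lz$ into a formally self-adjoint (Sturm–Liouville) form $\frac1{p}(p z')' + a_0 z$ with $p=e^{A_1}>0$; then $\int_0^1 e^{A_1(t)}\langle (Lz)(t),z(t)\rangle_+\,dt \leqslant [p\langle z',z\rangle_+]_0^1 - \int p|z'|^2 + \int p a_0|z|^2$, and the periodic conditions together with $p(0)=1$, $p(1)=e^{A_1(1)}$ — hmm, the boundary term is $p(1)\langle z'(1),z(1)\rangle_+ - p(0)\langle z'(0),z(0)\rangle_+ = (p(1)-p(0))\langle z'(0),z(0)\rangle_+$, which need not vanish. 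So the honest statement must be that dissipativity holds with respect to the pairing weighted by $e^{A_1}$, or that $\mathbf{(G_1)}$'s continuity of $a_1$ plus periodicity is enough to push the boundary term to the correct sign.

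The main obstacle I anticipate is precisely this boundary term from the $a_1$-drift: with general continuous $a_1$ one does not get $\langle Lz,z\rangle_+\leqslant 0$ against the unweighted pairing, so the lemma as literally stated ($L$ dissipative) presumably relies on re-norming $E$ — or equivalently on the fact that dissipativity is a statement about $\langle Lx - Ly, x-y\rangle_+$ where the semi-inner product can be taken relative to an equivalent norm on $L^2(I,E)$ weighted by $e^{A_1(t)}$ — under which $p(0)=p(1)$ is restored after normalizing, or the drift term is gauged away entirely. I would therefore structure the proof as: (1) reduce to $z=x-y\in D_L$ and single-valuedness of $J$; (2) establish the differentiation formula for $\langle z'(t),z(t)\rangle_+$ and the resulting integral inequality for $\int\langle z'',z\rangle_+$; (3) absorb the $a_1$-term via the integrating factor $p=e^{A_1}$ and check that the periodic boundary conditions make the boundary contribution vanish in the appropriately weighted pairing; (4) use $a_0\leqslant 0$ to discard the zeroth-order term, concluding $\langle Lz,z\rangle_+\leqslant 0$. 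The bookkeeping in steps (2)–(3) is routine once the weight is in place; the conceptual point is recognizing that "dissipative" here is meant with respect to the natural weighted semi-inner product on $L^2(I,E)$, under which the reduced problem \eqref{equation} is self-adjoint and negative semidefinite by $\mathbf{(G_1)}$.
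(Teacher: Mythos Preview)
Your overall structure---reduce to $z\in D_L$, use single-valuedness of $J$ under strict convexity of $E^*$, treat the three terms of $Lz$ separately, and exploit the periodic boundary conditions---matches the paper. Your treatment of the zeroth-order term ($a_0\leqslant 0$ gives $\int a_0|z|^2\leqslant 0$) and your subgradient-inequality argument for the second-order term are in the same spirit as the paper's, although the paper proceeds via the pointwise identity $\langle z'(t),J(z(t))\rangle=|z(t)|\frac{d}{dt}|z(t)|$ (differentiability of $t\mapsto|z(t)|$ coming from G\^ateaux-differentiability of the norm) and an analogous identity for $\langle z''(t),J(z(t))\rangle$, then integrates by parts to reach $\int_0^1\langle z'',J(z)\rangle\,dt=-\int_0^1|z'|^2\,dt\leqslant 0$, with the boundary term $|z(t)||z'(t)|\big|_0^1$ killed by periodicity.

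Where you diverge from the paper, and where your argument stalls, is the $a_1$-term. Your integrating-factor route (rewriting $L$ in Sturm--Liouville form with weight $p=e^{A_1}$) is not what the paper does, and as you yourself observe it leaves an uncontrolled boundary term $(p(1)-p(0))\langle z'(0),z(0)\rangle_+$. Your fallback speculation---that ``dissipative'' is meant relative to a weighted $L^2$ pairing---is incorrect: the paper works with the ordinary $L^2(I,E)$ semi-inner product throughout. The paper's device is instead to write $\langle a_1(t)z'(t),J(z(t))\rangle=a_1(t)|z(t)|\frac{d}{dt}|z(t)|=\tfrac12 a_1(t)\frac{d}{dt}|z(t)|^2$ and then invoke the mean value theorem for integrals to pull out $a_1(\xi)$, after which the remaining integral $\int_0^1\frac{d}{dt}|z(t)|^2\,dt=|z(1)|^2-|z(0)|^2=0$ by periodicity; the paper even rewrites this as $I=-I$ to conclude $I=0$. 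So the intended point is that the first-order contribution \emph{vanishes identically}, not that it is absorbed by a weight. (One may reasonably worry that the mean value theorem in the form $\int a_1 g=a_1(\xi)\int g$ requires $g=\frac{d}{dt}|z|^2$ to keep a fixed sign, which it need not; but this is the argument the paper gives, and it is what you were missing.) Drop the weighted-pairing detour and argue directly that $\int_0^1 a_1(t)\langle z'(t),J(z(t))\rangle\,dt$ is controlled via the exact differential $\frac{d}{dt}|z(t)|^2$ and the periodic conditions.
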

\begin{proof}
\par Take $x\in D_L$. It is well-known that strict convexity is a geometrical property which lift from the codomain $E$ to the Bochner space $L^p(I,E)$ when $1<p<\infty$. Therefore, the duality map $J\colon L^2(I,E)\to L^2(I,E^*)$ is univalent (\cite[Prop.12.3.]{deimling}). In view of the Riesz representation theorem the duality pairing $\langle x,J(x)\rangle_{L^2}$ in $L^2(I,E)\times L^2(I,E^*)$ is given by \[\langle x,J(x)\rangle_{L^2}=\int_0^1\langle x(t),J(x)(t)\rangle\,dt.\] At the same time, $\langle x,J(x)\rangle_{L^2}=||x||_2^2$. Thus, $J(x)(t)=J(x(t))$, where the duality map on the right is the ordinary one, between the spaces $E$ and $E^*$. Let us estimate the value of the semi-inner product $\langle Lx,x\rangle_+$:
\begin{align*}
\langle Lx,x\rangle_+&=\langle x''+a_1x'+a_0x,x\rangle_+\<\langle x'',x\rangle_++\langle a_1x',x\rangle_++\langle a_0x,x\rangle_+\\&=\langle x'',J(x)\rangle_{L^2}+\langle a_1x',J(x)\rangle_{L^2}+\langle a_0x,J(x)\rangle_{L^2}\\&=\int_0^1\langle x''(t),J(x)(t)\rangle\,dt+\int_0^1\langle a_1(t)x'(t),J(x)(t)\rangle\,dt+\int_0^1\langle a_0(t)x(t),J(x)(t)\rangle\,dt\\&=\int_0^1\langle x''(t),J(x(t))\rangle\,dt+\int_0^1\langle a_1(t)x'(t),J(x(t))\rangle\,dt+\int_0^1\langle a_0(t)x(t),J(x(t))\rangle\,dt
\end{align*}
Recall that the norm $|\cdot|$ in $E$ is Gateaux-differentiable on $E\setminus\{0\}$ iff $E^*$ is strictly convex (\cite[Prop.12.2.]{deimling}). It is not difficult to show that if $f\colon I\to E$ is differentiable at $t_0\in(0,1)$ while the operator $T\colon E\to\R{}$ is Lipschitz and Gateaux-differentiable at $f(t_0)$, then the composition $T\circ f$ is differentiable at the point $t_0$, namely $(T\circ f)'(t_0)=\langle DT(f(t_0)),f'(t_0)\rangle$, where $\langle DT(x_0),v\rangle$ denotes the value of the directional derivative of $T$ at $x_0$ in the direction $v$. In respect of this, the map $(0,1)\ni t\mapsto|x(t)|\in\R{}$ is differentiable. We have the following formula for the derivative of this function: \[|x(t)|\frac{d}{dt}|x(t)|=\langle\dot{x}(t),J(x(t))\rangle.\] A fully analogous reasoning to that in \cite[Lem.4.1]{barbu} leads also to the following conclusion \[|\dot{x}(t)|\frac{d}{dt}|x(t)|=\langle\dot{x}(t),J(\dot{x}(t))\rangle.\] Continuing,
\begin{align*}
\int_0^1\langle x''(t),J(x(t))\rangle\,dt&=\int_0^1|x(t)|\frac{d}{dt}|\dot{x}(t)|\,dt=|x(t)||\dot{x}(t)|\,\rule[-5pt]{0.5pt}{15pt}_{\,0}^1-\int_0^1|\dot{x}(t)|\frac{d}{dt}|x(t)|\,dt\\&=-\int_0^1\langle\dot{x}(t),J(\dot{x}(t))\rangle\,dt=-\int_0^1|\dot{x}(t)|^2\,dt\<0.
\end{align*}
At the same time, for a certain number $\xi\in I$ we have 
\begin{align*}
\int_0^1\langle a_1(t)\dot{x}(t),J(x(t))\rangle\,dt&=\int_0^1a_1(t)\langle\dot{x}(t),J(x(t))\rangle\,dt=a_1(\xi)\int_0^1\langle\dot{x}(t),J(x(t))\rangle\,dt\\&=a_1(\xi)\f(|x(t)|^2\,\rule[-5pt]{0.5pt}{15pt}_{\,0}^1-\int_0^1|x(t)|\frac{d}{dt}|x(t)|\,dt\g)\\&=-a_1(\xi)\int_0^1\langle\dot{x}(t),J(x(t))\rangle\,dt=-\int_0^1\langle a_1(t)\dot{x}(t),J(x(t))\rangle\,dt,
\end{align*}
that is to say \[\int_0^1\langle a_1(t)\dot{x}(t),J(x(t))\rangle\,dt=0.\] Finally, \[\int_0^1\langle a_0(t)x(t),J(x(t))\rangle\,dt=\int_0^1a_0(t)\langle x(t),J(x(t))\rangle\,dt=a_0(\xi)\int_0^1|x(t)|^2\,dt\<0,\] in view of the mean value theorem. Summing up, $\langle Lx,x\rangle_+\<0$ for every $x\in D_L$ and so $L$ is dissipative.
\end{proof}
\indent The main result of this section, concerning the geometric structure of the solution set of the periodic problem \eqref{inclusion}, contains the
following:
\begin{theorem}\label{acyc}
Let $E$ be a reflexive Banach space. Suppose that $F\colon I\times E\map E$ satisfies assumptions $(\F_1)$-$(\F_6)$. Assume further that \eqref{nierzsup} holds, together with
\begin{equation}\label{bound2}
(m+1)\sup_{t\in I}||G(t,\cdot)||_2<1.
\end{equation}
Then the solution set $S_{\!F}$ of the problem \eqref{inclusion} is nonempty compact acyclic in $C^1(I,E)$.
\end{theorem}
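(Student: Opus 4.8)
The plan is to write $S_{\!F}=\Psi^{-1}(\{0\})$ for an admissible field $\Psi$ on a large ball of $C^1(I,E)$ and apply Theorem \ref{gabor}, the acyclicity coming from the uniqueness of solutions of strongly accretive approximating problems. As in the proof of Theorem \ref{solset}, set $\Phi:=H\circ N_F$, which is usc with nonempty compact convex values, $\psi_{C^1}$-condensing on bounded sets, and has $\fix(\Phi)=S_{\!F}$; fix a radius $\rho$ exceeding the a priori $C^1$-bounds — obtained from \eqref{bound2} as in \eqref{intbound} (after the radial-retraction reduction of the proof of Theorem \ref{solset}) — for $S_{\!F}$ and for all the solution sets below, and put $X:=D_{C^1}(0,\rho)$. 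Define $\Psi\colon X\map C^1(I,E)$ by $\Psi(x):=x-\Phi(x)$, so $\Psi^{-1}(\{0\})=\fix(\Phi)=S_{\!F}$. Then $\Psi$ is usc with nonempty compact values and proper: for $K$ compact, $\{x\in X\colon\Psi(x)\cap K\neq\varnothing\}=\{x\in X\colon x\in K+\Phi(x)\}$ is bounded (a priori estimate), closed, and relatively compact since $\psi_{C^1}$ is invariant under union with compacta and $\Phi$ is $\psi_{C^1}$-condensing.

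Take as approximations the additive perturbations $F_n:=F+\tfrac1n\,\mathrm{id}_E$, i.e. $F_n(t,x)=F(t,x)+\tfrac1n x$. Then $F_n$ satisfies $(\F_1)$--$(\F_5)$ with $m+\tfrac1n$ in place of $m$ (so \eqref{bound} persists by \eqref{bound2}) and $\eta+\tfrac1n$ in place of $\eta$ (so $r(\hat H_n)\<\|\hat H_n\|_{\mathcal L}\<2\sup_{t\in I}\|G(t,\cdot)\|_2(\|\eta\|_2+\tfrac1n)<1$ for all large $n$, by \eqref{nierzsup}); hence Theorem \ref{solset} applies to $F_n$ and $\Phi_n:=H\circ N_{F_n}$ is usc, compact convex valued, $\psi_{C^1}$-condensing, with $\fix(\Phi_n)\subset X$. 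Passing, as in Remark \ref{zalozenia} and Lemma \ref{L}, to an equivalent norm making $E$ and $E^*$ strictly convex, so that $J$ is single valued and $J_{L^2}(u)=J(u(\cdot))$, the map $F_n(t,\cdot)$ becomes $\tfrac1n$-strongly accretive: by $(\F_6)$, for $u_i\in F(t,x_i)$, \[\langle(u_1+\tfrac1n x_1)-(u_2+\tfrac1n x_2),x_1-x_2\rangle=\langle u_1-u_2,x_1-x_2\rangle+\tfrac1n|x_1-x_2|^2\geqslant\tfrac1n|x_1-x_2|^2.\] Put $\Psi_n:=\mathrm{id}-\Phi_n$ on $X$ and $\eps_n:=\tfrac1n\|H\|_{\mathcal L}\rho$ (so $\eps_n\to0^+$; assume $\eps_n\<1$).

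Now verify (i)--(iii) of Theorem \ref{gabor}. Any $v\in N_{F_n}(x)$ equals $w+\tfrac1n x$ for a selection $w\in N_F(x)$ (measurable since $x$ is continuous), hence $x-H(v)=(x-H(w))-\tfrac1n H(x)$ lies within $\eps_n$ of $x-H(w)\in\Psi(x)\subset\Psi(B(x,\tfrac1n))$; this is (i), and when $x\in S_{\!F}$, say $x=H(w)$ with $w\in N_F(x)$, the point $x-H(w+\tfrac1n x)=-\tfrac1n H(x)$ shows $\Psi_n(x)\cap D(0,\eps_n)\neq\varnothing$, which is (ii). For (iii), fix $u$ with $\|u\|_{C^1}<\eps_n$. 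A point of $\{x\in X\colon u\in\Psi_n(x)\}$ is a solution of $x\in u+\Phi_n(x)$; such a solution exists by Theorem \ref{fix}, applied — as in the proof of Theorem \ref{solset} — to the retracted operator, the a priori bound then placing it in the interior of $X$ so that it solves the un-retracted inclusion. Uniqueness: if $x_1,x_2\in X$ both satisfy $x_i-u\in\Phi_n(x_i)$, write $x_i-u=H(v_i)\in D_L$ with $v_i\in N_{F_n}(x_i)$; then $x_1-x_2=(x_1-u)-(x_2-u)\in D_L$ and $L(x_1-x_2)=v_1-v_2$, so pairing with $x_1-x_2$ in $L^2$ and using Lemma \ref{L} on the left and the $\tfrac1n$-strong accretivity of $F_n(t,\cdot)$ on the right, \[0\geqslant\langle L(x_1-x_2),x_1-x_2\rangle=\langle v_1-v_2,x_1-x_2\rangle\geqslant\tfrac1n\|x_1-x_2\|_2^2,\] whence $x_1=x_2$. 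Thus the set in (iii) is a singleton, in particular acyclic, and Theorem \ref{gabor} gives that $S_{\!F}=\Psi^{-1}(\{0\})$ is nonempty, compact and acyclic in $C^1(I,E)$.

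The heart of the matter — and the step I expect to be the main obstacle — is the uniqueness argument in (iii): it is exactly what the apparatus is built around, and it rests on the dissipativity of $L$ (Lemma \ref{L}), on the renorming that renders the duality maps single valued so that the pointwise accretivity $(\F_6)$ integrates up in $L^2$, and on the strong accretivity manufactured by the $\tfrac1n$-shift together with the margin afforded by \eqref{bound2}. One should check with some care that the renorming is compatible with $(\F_6)$, that the solutions produced in (iii) genuinely inherit the periodic structure $x-u\in D_L$ (which they do, since $x-u=H(v)$), and that the condensing and compactness estimates for $\Phi_n$ are uniform in $n$ (which they are, by \eqref{nierzsup} and \eqref{bound2}); the remaining verifications merely repeat the arguments from the proof of Theorem \ref{solset}.
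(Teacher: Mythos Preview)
Your proposal is correct and follows essentially the same approach as the paper: the same additive perturbation $F_n=F+\tfrac1n\,\mathrm{id}$, the same vector fields $\Psi=\mathrm{id}-\Phi$ and $\Psi_n=\mathrm{id}-\Phi_n$ on a uniform a priori ball, the same verification of conditions (i)--(iii) of Theorem~\ref{gabor}, and the same uniqueness argument in (iii) combining the dissipativity of $L$ (Lemma~\ref{L}) with the $\tfrac1n$-strong accretivity of $N_{F_n}$ after renorming (the paper invokes Asplund--Trojanski to get local uniform convexity of both $E$ and $E^*$). The only points the paper makes slightly more explicit are the implication $\|x_1-x_2\|_{C^1}>0\Rightarrow\|x_1-x_2\|_2>0$ (needed to pass from $L^2$-equality to $C^1$-equality) and the precise form of the uniform a priori bound over all $n$ and all inhomogeneities $h$ with $\|h\|_{C^1}\leqslant 1$; your sketch covers both implicitly.
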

\begin{proof}
Bearing in mind the Asplund-Trojanski theorem, we are allowed to change $|\cdot|$ to an equivalent norm such that $E$ and $E^*$ with the corresponding dual norm are locally uniformly convex.\par We will approximate the right-hand side $F$ according to the following scheme: let $F_n\colon I\times E\map E$ be such that $F_n(t,x):=F(t,x)+\frac{1}{n}x$. It is obvious that $F_n$ satisfies $(\F_1)$-$(\F_6)$. In particular, for every bounded $\Omega\subset E$ we have $\beta\f(F_n(t,\Omega)\g)\<\f(\eta(t)+\frac{1}{n}\g)\beta(\Omega)$ a.e. on $I$.\par Fix $h\in C^1(I,E)$ such that $||h||_{C^1}\<1$. Denote by $S_n^h$ the set of solutions to the following integral inclusion
\begin{equation}\label{inclusion3}
x(t)\in h(t)+\int_0^1G(t,s)F_n(s,x(s))\,ds,\;\;t\in I.
\end{equation}
Under assumption \eqref{bound2} the supremum norm of each solution $x\in S_n^h$ may be estimated as follows
\[||x||\<\frac{||h||+\sup_{t\in I}||G(t,\cdot)||_2||c||_2}{1-\f(m+\frac{1}{n}\g)\sup_{t\in I}||G(t,\cdot)||_2}\<\frac{1+\sup_{t\in I}||G(t,\cdot)||_2||c||_2}{1-\f(m+1\g)\sup_{t\in I}||G(t,\cdot)||_2}=:R_1.\] On the other hand 
\begin{align*}
||x'||&\<||h'||+\sup_{t\in I}\f\Arrowvert\frac{\partial}{\partial t} G(t,\cdot)\g\Arrowvert_2\f(||c||_2+\f(m+\frac{1}{n}\g)||x||_2\g)\\&\<||h'||+\sup_{t\in I}\f\Arrowvert\frac{\partial}{\partial t} G(t,\cdot)\g\Arrowvert_2\f(||c||_2+(m+1)R_1\g).
\end{align*}
In other words, all solutions of the inclusion \eqref{inclusion} and of the perturbated problems \eqref{inclusion3} are uniformly a priori bounded in the space $C^1(I,E)$, i.e. there exists a constant $R>0$ such that \[S_{\!F}\cup\bigcup_{||h||_{C^1}\<1}\bigcup_{n=1}^\infty S_n^h\subset D_{C^1}(0,R).\] \par Denote by $\hat{H}_n\colon C(I,\R{})\to C(I,\R{})$ the following linear operator \[\hat{H}_n(u):=2\int_0^1|G(\cdot,s)|\f(\eta(s)+n^{-1}\g)u(s)\,ds.\] Under assumption \eqref{nierzsup}, starting from a certain $n_0\in\varmathbb{N}$ we have 
\begin{equation}\label{ineq}
r\f(\hat{H}_n\g)\<||\hat{H}_n||_{\mathcal L}\<2\sup\limits_{t\in I}||G(t,\cdot)||_2\f\Arrowvert\eta+\frac{1}{n}\g\Arrowvert_2\<2\sup\limits_{t\in I}||G(t,\cdot)||_2\f(||\eta||_2+\frac{1}{n}\g)<1
\end{equation}
for every $n\geqslant n_0$. On the basis of the findings made in the proof of Theorem \ref{solset}. we infer that the set-valued operator $\Phi_n\colon D_{C^1}(0,R)\map C^1(I,E)$, defined by $\Phi_n:=H\circ N_{F_n}$, is condensing relative to MNC $\psi_{C^1}$ for $n$ large enough. It is a well-established knowledge that a multivalued vector field associated with a compact valued usc map condensing relative to some monotone, algebraically semiadditive and regular MNC, is proper. Therefore, one can associate to maps $\Phi_n$ and $\Phi$ convex compact valued upper semicontinuous and proper vector fields $\Psi_n:=id-\Phi_n$ and $\Psi:=id-\Phi$ (where $id$ denotes the identity).\par Take $x\in D_{C^1}(0,R)$. If $y\in\Psi_n(x)$, then there is $w\in N_{F_n}(x)$ such that $y=x-H(w)$. Put $z:=x-H\f(w-\frac{1}{n}x\g)$. Then 
\begin{align*}
||y-z||_{C^1}&=\f\Arrowvert x-H(w)-\f(x-H\f(w-\frac{1}{n}x\g)\g)\g\Arrowvert_{C^1}=\f\Arrowvert H\f(\frac{1}{n}x\g)\g\Arrowvert_{C^1}\<\frac{||H||_{\mathcal L}||x||_2}{n}\\&\<\frac{||H||_{\mathcal L}||x||_{C^1}}{n}\<\frac{||H||_{\mathcal L}R}{n},
\end{align*} 
whence it follows that there is $(\eps_n)_\n\subset\R{}$ such that $\eps_n\to 0^+$ and $\Psi_n(x)\subset B_{C^1}(\Psi(x),\eps_n)$ for every $x\in D_{C^1}(0,R)$. In fact, $d_{C^1}(\Psi_n(x),\Psi(x))\to 0$ as $n\to\infty$ (where $d_{C^1}$ is the Hausdorff distance).\par In order to verify (ii) in Theorem \ref{gabor}., assume that $0\in\Psi(x)$. It means that $x=H(w)$ and $x'=\frac{d}{dt}H(w)$ for some $w\in N_F(x)$. In other words, $||x-H(w)||_{C^1}=0$. Put $u:=w+\frac{1}{n}x$ and $y:=x-H(u)$. Obviously, $u\in N_{F_n}(x)$ and $y\in\Psi_n(x)$. A straightforward calculation proves that 
\[||y||_{C^1}=\f\Arrowvert x-H\f(w+\frac{1}{n}x\g)\g\Arrowvert_{C^1}\<||x-H(w)||_{C^1}+\f\Arrowvert H\f(\frac{1}{n}x\g)\g\Arrowvert_{C^1}=\frac{||H(x)||_{C^1}}{n}\<\frac{||H||_{\mathcal L}R}{n}.\] Hence, $y\in\Psi_n(x)\cap D_{C^1}(0,\eps_n)$.\par One can readily check that the Nemytski\v{\i} operator \[N_{F_n}\f(C^1(I,E),||\cdot||_2\g)\map\f(L^2(I,E),||\cdot||_2\g),\] corresponding to $F_n$, is strongly accretive, and more specifically
\begin{equation}\label{strdis}
\langle N_{F_n}(x)-N_{F_n}(y),x-y\rangle_+\geqslant \frac{1}{n}||x-y||_2^2
\end{equation} 
for all $x,y\in C^1(I,E)$. Let us also note that \[||x_1-x_2||_{C^1}>0\Rightarrow ||x_1-x_2||_2>0.\] Indeed, if $x_1,x_2\in C^1(I,E)$ and $x_1(t)=x_2(t)$ a.e. in $I$, then \[\int_0^1\dot{x}_1(t)\varphi(t)\,dt=-\int_0^1x_1(t)\varphi'(t)\,dt=-\int_0^1x_2(t)\varphi'(t)\,dt=\int_0^1\dot{x}_2(t)\varphi(t)\,dt,\] whence \[\int_0^1(\dot{x}_1(t)-\dot{x}_2(t))\varphi(t)\,dt=0\] for all $\varphi\in C^\infty_c((0,1))$. By virtue of du Bois-Reymond's lemma (\cite[Prop.2.2.19]{papa}) we have $\dot{x}_1(t)=\dot{x}_2(t)$ a.e. in $I$. Thus, $x_1(t)=x_2(t)$ and $\dot{x}_1(t)=\dot{x}_2(t)$ for each $t\in I$. Consequently, $||x_1-x_2||_{C^1}=0$.\par Fix $h\in B_{C^1}(0,\eps_n)$ for sufficiently large $n$. Everyone should easily realize that the set of solutions of the inclusion $h\in\Psi_n(x)$ coincides with the solution set $S_n^h$ of the Hammerstein inclusion \eqref{inclusion3}. Therefore, in order to take advantage of Theorem \ref{gabor}. it is sufficient to show that $S_n^h$ is nonempty acyclic. Taking into account Remarks \ref{rem1}. and \ref{rem2}. concerning Theorem \ref{solset}. and having regard that inequality \eqref{ineq} is valid for $n$ large enough, the non-emptiness of the solution set $S_n^h$ can be considered justified.\par We claim that inclusion \eqref{inclusion3} possesses at most one solution. Suppose to the contrary that there are two different solutions $x_1, x_2\in C^1(I,E)$ of the problem \eqref{inclusion3}, which means that there exist also two Bochner integrable selections $w_1\in N_{F_n}(x_1)$ and $w_2\in N_{F_n}(x_2)$ such that $x_1=h+H(w_1)$ and $x_2=h+H(w_2)$. Consequently, $L(x_1-x_2)=w_1-w_2$. As we established previously, $||x_1-x_2||_2>0$. Now, applying Lemma \ref{L}. and property \eqref{strdis} we deduce that 
\begin{align*}
0&=\langle -L(x_1-x_2)+(w_1-w_2),x_1-x_2\rangle_+\geqslant\langle -L(x_1-x_2),x_1-x_2\rangle_++\langle w_1-w_2,x_1-x_2\rangle_+\\&\geqslant\langle w_1-w_2,x_1-x_2\rangle_+\geqslant\frac{1}{n}||x_1-x_2||_2^2>0
\end{align*}
- a contradiction. Actually, we have shown that the solution set $S_n^h$ is a singleton. In view of Theorem \ref{gabor}. the set $\f\{x\in D_{C^1}(0,R)\colon 0\in\Psi(x)\g\}$ is compact acyclic. In other words, the solution set $S_{\!F}$ of the original problem \eqref{inclusion} is compact acyclic as well.
\end{proof}
\begin{corollary}
Under assumptions of Theorem \ref{acyc}. the solution set $S_{\!F}$ of the periodic boundary value problem \eqref{inclusion} forms a nonempty compact acyclic subset of the space $\Hi^2(0,1;E)$ endowed with the weak topology $\sigma\f(\Hi^2,\f(\Hi^2\g)^*\g)$.
\end{corollary}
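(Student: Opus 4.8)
The plan is to transfer the conclusion of Theorem~\ref{acyc} through the continuous embedding $\Hi^2(0,1;E)\hookrightarrow C^1(I,E)$, by showing that on the solution set $S_{\!F}$ the norm topology of $C^1(I,E)$ and the relative weak topology $\sigma\f(\Hi^2,\f(\Hi^2\g)^*\g)$ actually coincide. Concretely, I would prove that the identity map
\[\iota\colon\f(S_{\!F},||\cdot||_{C^1}\g)\longrightarrow\f(S_{\!F},\sigma\f(\Hi^2,\f(\Hi^2\g)^*\g)\g)\]
is a homeomorphism. The domain is a \emph{compact} metric space by Theorem~\ref{acyc}, the codomain is Hausdorff since the weak topology of a Banach space separates points, and a continuous bijection from a compact space onto a Hausdorff space is automatically a homeomorphism; \v Cech homology being a topological invariant, nonemptiness, compactness and acyclicity would then pass from $\f(S_{\!F},||\cdot||_{C^1}\g)$ to $\f(S_{\!F},\sigma\f(\Hi^2\g)\g)$, which is exactly the assertion.

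The only real content is the (sequential) continuity of $\iota$, and here the domain being metrizable reduces the task to verifying the implication $x_n\to x$ in $C^1(I,E)\Rightarrow x_n\rightharpoonup x$ in $\Hi^2(0,1;E)$ for $(x_n)_\n\subset S_{\!F}$ --- this is precisely the computation already performed at the end of the proof of Theorem~\ref{solset}. Writing $Lx_n=w_n\in N_F(x_n)$ with $w_n\in L^2(I,E)$, the growth condition $(\F_4)$ (or the integrable bound $(\F_4')$) bounds $(w_n)_\n$ in $L^2(I,E)$; reflexivity of $E$ makes $L^2(I,E)$ reflexive, so by the Eberlein--\v Smulian theorem any subsequence of $(w_n)_\n$ admits a further subsequence $w_{k_n}\rightharpoonup w$ in $L^2(I,E)$, and the weak upper semicontinuity of $N_F$ (Remark~\ref{zalozenia} combined with \cite[Prop.1.]{pietkun}) forces $w\in N_F(x)$. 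Since the Hammerstein operator $H\colon L^2(I,E)\to\Hi^2(0,1;E)$ is continuous and linear, hence weak-to-weak continuous, we get $x_{k_n}=H(w_{k_n})\rightharpoonup H(w)$ in $\Hi^2(0,1;E)$; comparing with $x_{k_n}\to x$ in $C^1(I,E)$ and using uniqueness of weak limits yields $x=H(w)\in(H\circ N_F)(x)$, i.e. $x\in S_{\!F}$, and $x_{k_n}\rightharpoonup x$ in $\Hi^2(0,1;E)$. As the extracted subsequence was arbitrary, the whole sequence satisfies $x_n\rightharpoonup x$ in $\Hi^2(0,1;E)$, which is the sought sequential continuity.

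Once $\iota$ is known to be a continuous bijection from a compact space to a Hausdorff space, it is a homeomorphism, and the conclusion follows. I expect the only delicate point to be the handling of the weak topology, which is not metrizable in general: for this reason the continuity of $\iota$ must be argued via the ``every subsequence has a weakly convergent sub-subsequence with the right limit'' device rather than by a direct sequential limit, and the homeomorphism must be obtained from the abstract compact-to-Hausdorff principle rather than by producing a continuous inverse. (Alternatively, compactness of $\f(S_{\!F},\sigma\f(\Hi^2\g)\g)$ may simply be quoted verbatim from the final paragraph of the proof of Theorem~\ref{solset}, and only acyclicity needs to be transported through $\iota$.)
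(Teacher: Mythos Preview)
Your proposal is correct and follows essentially the same route as the paper: both arguments establish that the identity map $\iota\colon(S_{\!F},||\cdot||_{C^1})\to(S_{\!F},\sigma(\Hi^2,(\Hi^2)^*))$ is continuous via the same mechanism (bounded $w_n=Lx_n\in N_F(x_n)$, weak sequential compactness in $L^2(I,E)$, weak-to-weak continuity of the linear operator $H$), and then invoke the compact-to-Hausdorff principle to transport the \v Cech homology. The only cosmetic difference is that the paper quotes Theorem~\ref{solset} separately for weak compactness of $S_{\!F}$ in $\Hi^2$, whereas you obtain it directly from the homeomorphism; your handling of the sub-subsequence argument and the non-metrizability caveat is also more explicit than the paper's.
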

\begin{proof}
Take $u_n\in S_{\!F}$ such that $||u_n-u||_{C^1}\to 0$ as $n\to\infty$. Since $u_n$ is a solution of \eqref{inclusion} we have $Lu_n=f_n$ for some $f_n\in N_F(u_n)$. Recall that the Nemytski\v{\i} operator $N_F$ is weakly upper semicontinuous. Hence, passing to a subsequence if necessary, we may assume that $(f_n)_\n$ converges weakly in $L^2(I,E)$ to a particular $f\in N_F(u)$. Observe that $u_n=H(f_n)\rightharpoonup H(f)$ in $\Hi^2(0,1;E)$, because the Hammerstein operator $H\colon L^2\to\Hi^2$ is linear continuous. Of course, $H(f_n)\rightharpoonup H(f)$ in $\Hi^1(0,1;E)$ as well. On the other hand, we know that $u_n\to u$ in $\Hi^1(0,1;E)$. Consequently, $(u_n)_\n$ converges weakly to $u$ in $\Hi^2(0,1;E)$. \par We have shown that the identity operator $id\colon \f(S_{\!F},||\cdot||_{C^1}\g)\to\Hi^2(0,1;E)$ is demicontinuous. By Theorem \ref{solset}. the solution set $S_{\!F}$ is compact in $\f(\Hi^2(0,1;E),\sigma\f(\Hi^2,\f(\Hi^2\g)^*\g)\g)$. Thus, $id\colon\f(S_{\!F},||\cdot||_{C^1}\g)\to\f(S_{\!F},\sigma\f(\Hi^2,\f(\Hi^2\g)^*\g)\g)$ is a continuous mapping between compact topological spaces. Consequently, the graded vector space $H_*\f(\f(S_{\!F},\sigma\f(\Hi^2,\f(\Hi^2\g)^*\g)\g)\g)$ of \v{C}ech homologies with coefficients in $\varmathbb{Q}$ is isomorphic to the graded vector space $H_*\f(\f(S_{\!F},||\cdot||_{C^1}\g)\g)$. By virtue of Theorem \ref{acyc}. the reduced homologies $\w{H}_*\f(\f(S_{\!F},||\cdot||_{C^1}\g)\g)$ are trivial. Therefore, the solution set $S_{\!F}$ is acyclic as a subset of the space $\f(\Hi^2(0,1;E),\sigma\f(\Hi^2,\f(\Hi^2\g)^*\g)\g)$.
\end{proof}
\begin{corollary}
Under assumptions of Theorem \ref{acyc}. the solution set $S_{\!F}$ of the periodic boundary value problem \eqref{inclusion} forms a continuum in the space $C^1(I,E)$. The solution set $S_{\!F}$ is also a continuum as a subspace of $\f(\Hi^2(0,1;E),\sigma\f(\Hi^2,\f(\Hi^2\g)^*\g)\g)$.
\end{corollary}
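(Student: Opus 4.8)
The plan is to derive the statement from the acyclicity already obtained, using nothing more than the elementary fact that a nonempty acyclic compact Hausdorff space is connected. Recall that a continuum is, by definition, a nonempty compact connected Hausdorff space; since $S_{\!F}\subset C^1(I,E)$ is metrizable it is automatically Hausdorff, and Theorem \ref{acyc}. already supplies nonemptiness and compactness, so the only thing left to check is connectedness.

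First I would invoke the behaviour of \v Cech homology in degree zero. For a compact Hausdorff space the connected components coincide with the quasi-components, and $H_0(\,\cdot\,)$ with coefficients in $\varmathbb{Q}$ is a $\varmathbb{Q}$-vector space whose dimension equals the number of these components. Since Theorem \ref{acyc}. gives $H_0(S_{\!F})=\varmathbb{Q}$ and $S_{\!F}\neq\varnothing$, the set $S_{\!F}$ has precisely one component, hence is connected. Together with compactness and nonemptiness this is exactly the assertion that $S_{\!F}$ is a continuum in $C^1(I,E)$.

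For the weak topology on $\Hi^2(0,1;E)$ there are two equally short routes. One may simply repeat the previous paragraph verbatim, using the preceding Corollary, which states that $S_{\!F}$ is nonempty compact acyclic also in $\f(\Hi^2(0,1;E),\sigma\f(\Hi^2,\f(\Hi^2\g)^*\g)\g)$, and noting that this space is Hausdorff. Alternatively, and avoiding any re-examination of the homology, I would use the demicontinuity of the identity $id\colon\f(S_{\!F},||\cdot||_{C^1}\g)\to\f(\Hi^2(0,1;E),\sigma\f(\Hi^2,\f(\Hi^2\g)^*\g)\g)$ established in the proof of the preceding Corollary: the continuous image of the nonempty compact connected space $\f(S_{\!F},||\cdot||_{C^1}\g)$ is again nonempty, compact and connected, while the weak topology is Hausdorff, so this image is a continuum.

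I do not expect a genuine obstacle. The one point that wants a little care is the passage from ``$H_0(S_{\!F})=\varmathbb{Q}$'' to ``$S_{\!F}$ is connected'', which is why one has to keep in mind that we are working inside a compact Hausdorff space (metrizable, in the $C^1$ case), so that components and quasi-components agree and the rank of $H_0$ genuinely counts them.
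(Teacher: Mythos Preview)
Your proposal is correct and matches the paper's treatment: the paper states this corollary without proof, leaving it as the immediate consequence of acyclicity (since $H_0(S_{\!F})=\varmathbb{Q}$ forces connectedness of the compact Hausdorff space $S_{\!F}$) together with the compactness results already established. Your two alternative routes for the weak $\Hi^2$-topology are both valid and are exactly the kind of one-line justification the paper is implicitly relying on.
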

\begin{corollary}
Suppose that $F\colon I\times\R{N}\map\R{N}$ satisfies assumptions $(\F_1)$-$(\F_4)$ and the multimap $F(t,\cdot)$ is monotone for a.a. $t\in I$. Assume further that \eqref{bound2} holds. Then the solution set $S_{\!F}$ of the periodic boundary value problem \eqref{inclusion} is nonempty compact acyclic in the space $C^1\f(I,\R{N}\g)$ as well as in the space $\Hi^2\f(0,1;\R{N}\g)$ endowed with the weak topology.
\end{corollary}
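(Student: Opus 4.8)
The plan is to deduce the statement directly from Theorem \ref{acyc} and from the corollary immediately following it, by checking that in the finite-dimensional case $E=\R{N}$ every hypothesis of those two results is automatically available. The only point requiring any care is the choice of norm, and this is where I would start: equip $\R{N}$ with the Euclidean norm. Then $\R{N}$ is reflexive and both $\R{N}$ and its dual are uniformly convex — in particular locally uniformly convex and strictly convex — so the Asplund--Trojanski renorming performed at the outset of the proof of Theorem \ref{acyc} is vacuous here, and Lemma \ref{L} applies verbatim, securing dissipativity of $L$. For the Euclidean structure the duality map $J$ is the identity, whence $\langle x,y\rangle_+=\langle x,y\rangle_-$ coincides with the usual inner product; consequently the accretivity condition $(\F_6)$ for $F(t,\cdot)$ reads $\langle u-w,x-y\rangle\geqslant 0$ for $u\in F(t,x)$, $w\in F(t,y)$, i.e. it is exactly the monotonicity of $F(t,\cdot)$ postulated in the corollary.

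Next I would check the remaining hypotheses of Theorem \ref{acyc}. Assumptions $(\F_1)$, $(\F_2)$, $(\F_4)$ are part of the statement; since the weak and the norm topology of $\R{N}$ coincide, $(\F_3)$ amounts to the norm-closedness of $\Graph F(t,\cdot)$, which is understood within the Carath\'eodory-type framework (compare the corollary following Theorem \ref{solset}). Because every bounded subset of $\R{N}$ is relatively compact, the Hausdorff measure $\beta(\,\cdot\,;\R{N})$ vanishes identically, so $(\F_5)$ holds with $\eta\equiv 0$; in particular the operator $\hat H$ of \eqref{operator} is the zero operator, $r(\hat H)=0<1$, and inequality \eqref{nierzsup} reduces to $0<1$ and is thus satisfied. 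Condition \eqref{bound2} is assumed outright, and it implies \eqref{bound}; moreover, if the constant $m$ of $(\F_4)$ equals $0$ then $F$ is $L^2$-integrably bounded, so in either case the hypotheses of Theorem \ref{solset} are met as well.

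With all this in hand, Theorem \ref{acyc} yields at once that $S_{\!F}$ is a nonempty compact acyclic subset of $C^1(I,\R{N})$, and the corollary immediately following Theorem \ref{acyc} — whose hypotheses coincide with those of Theorem \ref{acyc} — gives that $S_{\!F}$ is a nonempty compact acyclic subset of $\Hi^2(0,1;\R{N})$ endowed with the weak topology $\sigma\f(\Hi^2,\f(\Hi^2\g)^*\g)$. The statement about the weakly sequentially upper semicontinuous dependence of $\mf$ is then read off from the corollary following Theorem \ref{solset}. There is no serious obstacle: the whole argument is a specialization, and the single delicate spot is the one dispatched in the first paragraph, namely that the norm used must be compatible simultaneously with monotonicity (so that $(\F_6)$ follows) and with the geometric requirements of Lemma \ref{L} and of the renorming step in the proof of Theorem \ref{acyc} — which the Euclidean norm on $\R{N}$ achieves for free.
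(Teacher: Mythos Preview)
Your argument is correct and is precisely the specialization the paper has in mind: in $\R{N}$ the Hausdorff MNC vanishes, so $(\F_5)$ and \eqref{nierzsup} hold trivially with $\eta\equiv 0$, while for the Euclidean norm the duality map is the identity and accretivity in $(\F_6)$ coincides with the assumed monotonicity; Theorem \ref{acyc} and its immediate corollary then apply directly. Two minor remarks: $(\F_3)$ is already part of the hypotheses $(\F_1)$--$(\F_4)$ and need not be argued separately, and the final sentence about the weak sequential upper semicontinuity of $\mf$ is extraneous---that conclusion belongs to the corollary following Theorem \ref{solset}, not to the present one.
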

\begin{corollary}
Let $E$ be a reflexive Banach space. Suppose that the univalent mapping $f\colon I\times E\to E$ satisfies
\begin{itemize}
\item[$(f_1)$] the map $f(\cdot,x)$ is strongly measurable for every $x\in E$,
\item[$(f_2)$] the map $f(t,\cdot)$ is demicontinuous for a.a. $t\in I$, i.e. $f(t,\cdot)\colon(E,|\cdot|)\to(E,\sigma(E,E^*))$ is continuous,
\item[$(f_3)$] there is $c\in L^2(I,\R{})$ and $m\geqslant 0$ such that $|f(t,x)|\<c(t)+m|x|$ for a.a. $t\in I$ and for all $x\in E$,
\item[$(f_4)$] there is a function $\eta\in L^2(I,\R{})$ such that for all bounded subsets $\Omega\subset E$ and for a.a. $t\in I$ the inequality holds \[\beta(f(t,\Omega);E)\<\eta(t)\beta(\Omega;E),\]
\item[$(f_5)$] the map $f(t,\cdot)$ is accretive for a.a. $t\in I$.
\end{itemize}
Assume further that \eqref{nierzsup} and \eqref{bound2} holds. Then the solution set $S_{\!f}$ of the periodic boundary value problem \eqref{inclusion}, with the right-hand side $F$ replaced by the mapping $f$, is a compact $R_\delta$ in the space $C^1(I,E)$.
\end{corollary}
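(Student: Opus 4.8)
The plan is to obtain the statement almost entirely from Theorem~\ref{acyc} and its proof, the only extra ingredient being the classical $R_\delta$-refinement of the Browder--Gupta principle. First I would regard $f$ as the set-valued map $F(t,x):=\{f(t,x)\}$. With singleton values $(\F_1)$ is trivial, $(\F_2)$, $(\F_4)$, $(\F_5)$, $(\F_6)$ are literally $(f_1)$, $(f_3)$, $(f_4)$, $(f_5)$, while $(\F_3)$ follows from $(f_2)$, since a demicontinuous single-valued map has a graph closed in $(E,|\cdot|)\times(E,\sigma(E,E^*))$ (weak limits are unique). Plainly $S_{\!f}=S_{\!F}$, and with $f_n(t,x):=f(t,x)+\frac1n x$ the operators $\Phi:=H\circ N_f$ and $\Phi_n:=H\circ N_{f_n}$ occurring in the proof of Theorem~\ref{acyc} are now single-valued and continuous, while $N_{f_n}$ is strongly accretive in the sense of \eqref{strdis}. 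Hence that proof goes through word for word: after the Asplund--Trojanski renorming (so that Lemma~\ref{L} is available) one gets an a priori bound $R>0$ with $S_{\!f}\cup\bigcup_{\|h\|_{C^1}\le1}\bigcup_n S_n^h\subset D_{C^1}(0,R)$, the maps $\Phi_n$ become $\psi_{C^1}$-condensing for $n\ge n_0$, and — the crucial point — for every $h$ with $\|h\|_{C^1}\le1$ and every $n\ge n_0$ the solution set $S_n^h$ of \eqref{inclusion3} is a \emph{singleton}. In particular $S_{\!f}$ is nonempty and compact in $C^1(I,E)$.

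Next I would upgrade acyclicity to the $R_\delta$-property. Since $f$ is single-valued, $\Psi:=id-\Phi$ and $\Psi_n:=id-\Phi_n$ are single-valued proper (condensing) vector fields on $D_{C^1}(0,R)$, and $\Psi_n=\Psi-\frac1n H$ yields $\|\Psi_n(x)-\Psi(x)\|_{C^1}\le\eps_n:=\|H\|_{\mathcal L}R/n\to0^+$ on $D_{C^1}(0,R)$; by the previous paragraph, for $|u|<\eps_n$ (hence $\|u\|_{C^1}\le1$) and $n$ large the equation $u\in\Psi_n(x)$ has exactly one solution. These are precisely the hypotheses of the $R_\delta$-counterpart of Theorem~\ref{gabor} — condition~(iii) strengthened so that each $\{x:u\in\Psi_n(x)\}$ is required to be a singleton rather than merely acyclic, which is the Browder--Gupta theorem; see \cite{gabor}, \cite[III.3]{andres} or \cite{djebali}. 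It follows that $S_{\!f}=\Psi^{-1}(\{0\})$ is a compact $R_\delta$-set in $C^1(I,E)$. The underlying decomposition is obtained, after the standard nesting bookkeeping, from sets $X_n:=\Psi_n^{-1}(K_n)$ with $K_n:=-\frac1n H\bigl(\overline{\co}(S_{\!f}\cup\{0\})\bigr)\subset D_{C^1}(0,1)$ for $n$ large: $K_n$ is compact convex, hence contractible; $X_n$ is compact since $\Psi_n$ is proper; $\Psi_n$ maps $X_n$ bijectively onto $K_n$ (injectivity from uniqueness of $S_n^u$, surjectivity from its non-emptiness, which rests on Remarks~\ref{rem1}, \ref{rem2} and \eqref{ineq}), so $X_n\cong K_n$ is contractible; $S_{\!f}\subset X_n$ because $\Psi_n(x)=-\frac1n H(x)\in K_n$ for $x\in S_{\!f}$; and $\bigcap_n X_n=S_{\!f}$ since a point of the intersection is a $2\eps_n$-approximate zero of $\Psi$ for every $n$.

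The main obstacle is exactly what the quoted theorem is designed to absorb: organising the approximate-solution sets into a genuinely \emph{decreasing} sequence of compact contractible sets. In an infinite-dimensional condensing situation the natural candidates $\Psi_n^{-1}(K_n)$ do not nest automatically, and dealing with this is the reason one appeals to the ready-made $R_\delta$-version of Browder--Gupta instead of reproving it. All the analytic substance — the a priori bound, the dissipativity of $L$ (Lemma~\ref{L}), the strong accretivity \eqref{strdis} forcing each $S_n^h$ to be a singleton, and the $\psi_{C^1}$-condensing estimates — is already contained in the proof of Theorem~\ref{acyc}, so no new estimates are needed here.
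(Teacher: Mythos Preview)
Your proposal is correct and follows essentially the same route as the paper: both reduce to the single-valued Browder--Gupta theorem by introducing $f_n(t,x)=f(t,x)+\frac1n x$, noting that $\Psi=id-H\circ N_f$ and $\Psi_n=id-H\circ N_{f_n}$ are continuous proper (condensing) vector fields on $D_{C^1}(0,R)$, verifying the uniform approximation $\|\Psi_n-\Psi\|_{C^1}\le\|H\|_{\mathcal L}R/n$, and using Lemma~\ref{L} together with the strong accretivity \eqref{strdis} to conclude that each equation $\Psi_n(x)=h$ has exactly one solution. The paper simply cites the Browder--Gupta theorem at this point, whereas you additionally sketch the underlying $R_\delta$-decomposition via $X_n=\Psi_n^{-1}(K_n)$; this extra discussion is sound but not required, and you correctly identify the nesting issue as the reason for invoking the ready-made theorem rather than building the decomposition by hand.
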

\begin{proof}
Observe that the single-valued Nemytski\v{\i} operator $N_f\colon C^1(I,E)\to L^2(I,E)$, corresponding to the mapping $f$, is demicontinuous. It is clear in view of \cite[Prop.2.]{pietkun}, given that the dual $E^*$ is reflexive.\par Let $f_n\colon I\times E\to E$ be such that $f_n(t,x):=f(t,x)+\frac{1}{n}x$. The maps $\Phi:=H\circ N_f$ and $\Phi_n:=H\circ N_{f_n}$, introduced previously in the proof of Theorem \ref{acyc}., are now continuous univalent functions, condensing relative to MNC $\psi_{C^1}$. Consequently, the vector fields $\Psi,\Psi_n\colon D_{C^1}(0,R)\to C^1(I,E)$ given by $\Psi:=id-\Phi$, $\Psi_n:=id-\Phi_n$, are proper continuous. In order to prove the thesis it is sufficient to verify that there exists a sequence of positive numbers $(\eps_n)_\n$ converging to zero such that the following two conditions hold:
\begin{itemize}
\item[(a)] $||\Psi_n(x)-\Psi(x)||_{C^1}\<\eps_n$ for every $x\in D_{C^1}(0,R)$,
\item[(b)] for every $\n$ and every $h\in C^1(I,E)$ with $||h||_{C^1}\<\eps_n$ the equation $\Psi_n(x)=h$ has a unique solution.
\end{itemize}
\par Take $x\in D_{C^1}(0,R)$. An easy calculation shows that 
\[||\Psi_n(x)-\Psi(x)||_{C^1}\!=||H(N_{f_n}(x)-N_f(x))||_{C^1}\!\<||H||_{\mathcal L}||N_{f_n}(x)-N_f(x)||_2\!=\!\frac{||H||_{\mathcal L}||x||_2}{n}\<\!\frac{||H||_{\mathcal L}R}{n},\]
whence (a) already follows. The proof of the property (b) does not deviate in any way from the justification that the integral inclusion \eqref{inclusion3} possesses a unique solution. The application of the well-known Browder-Gupta theorem completes the proof.
\end{proof}
\indent The issue concerning description of the topology of solutions to periodic problems for ordinary differential equations of second order is already present in the literature on the subject. The following theorem extends the thesis of \cite[Th.6.]{papa3} beyond the scalar case. Let us mention that the authors of \cite{papa3} exploited in their work the method of lower and upper solutions instead of a coercivity condition (ii).
\begin{theorem}\label{convex}
Assume that the Carath\'eodory function $f\colon I\times\R{N}\to\R{N}$ satisfies
\begin{itemize}
\item[(i)] there is $c\in L^2(I,\R{})$ and $m>0$ such that $|f(t,x)|\<c(t)+m|x|$ for a.a. $t\in I$ and for all $x\in\R{N}$,
\item[(ii)] $\langle x,f(t,x)\rangle\geqslant c_1|x|^2-a(t)\,$ for a.a. $t\in I$ and all $x\in\R{N}$ with $a\in L^1(I,\R{}_+)$ and $c_1>0$,
\item[(iii)] the map $f(t,\cdot)$ is monotone for $t\in I$.
\end{itemize}
Then the solution set $S_{\!f}$ of the following periodic boundary value problem
\begin{equation}\label{papa}
\begin{gathered}
x''(t)=f(t,x(t))-Mx'(t)\;\;\;\text{a.e. on } I,\\
x(0)=x(1),\\
x'(0)=x'(1),
\end{gathered}
\end{equation}
where $M>0$, is nonempty and convex. Moreover, if $c_1>m$, then $S_{\!\!f}$ is also weakly compact in $\Hi^2\f(0,1;\R{N}\g)$.
\end{theorem}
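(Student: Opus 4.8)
The plan is to step outside the Green's-function framework of Section 3, which is unavailable here: the principal linear part $L_0:=\frac{d^2}{dt^2}+M\frac{d}{dt}$ fails Assumption $\mathbf{(G_1)}$, since the constants solve the homogeneous periodic problem $x''+Mx'=0$, so $L_0$ is not invertible and carries no influence function. Instead I would recast \eqref{papa} as a zero-finding problem for a maximal monotone operator on $L^2(I,\R{N})$, the coercivity hypothesis (ii) taking over the role that lower/upper solutions play in \cite{papa3}. Set $L^2:=L^2(I,\R{N})$ with its Hilbert norm, and let $\mathcal{A}\colon D(\mathcal{A})\subset L^2\to L^2$, $\mathcal{A}x:=-x''-Mx'$, with $D(\mathcal{A}):=\{x\in\Hi^2(0,1;\R{N})\colon x(0)=x(1),\ x'(0)=x'(1)\}$, and let $\mathcal{F}\colon L^2\to L^2$ be the Nemytski\v{\i} operator $\mathcal{F}(x)(t):=f(t,x(t))$. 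Then $x\in S_{\!f}$ if and only if $x\in D(\mathcal{A})$ and $\mathcal{A}x+\mathcal{F}(x)=0$; in other words $S_{\!f}=\mathcal{B}^{-1}(0)$ for $\mathcal{B}:=\mathcal{A}+\mathcal{F}$.

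The first step is to check that $\mathcal{B}$ is maximal monotone. Integrating by parts and using the periodic conditions, $\langle\mathcal{A}x,x\rangle_{L^2}=\|x'\|_2^2\geqslant 0$ (the first-order term contributing $\tfrac M2\int_0^1\tfrac{d}{dt}|x|^2\,dt=0$), so $\mathcal{A}$ is monotone — this is also the content of Lemma \ref{L} in the present case, the dual of $\R{N}$ being strictly convex. Maximality follows from Minty's criterion $\im(id+\mathcal{A})=L^2$: indeed $id+\mathcal{A}=-(L_0-id)$ and the operator $L_0-id=\frac{d^2}{dt^2}+M\frac{d}{dt}-1$ \emph{does} satisfy $\mathbf{(G_1)}$ (its characteristic roots are real and nonzero, so its homogeneous periodic problem has only the trivial solution), hence it is a bijection of $D(\mathcal{A})$ onto $L^2$ whose inverse is the corresponding Hammerstein operator. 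On the other side, by (i) the operator $\mathcal{F}$ is everywhere defined, bounded on bounded sets and continuous, and by (iii) it is monotone; being monotone, hemicontinuous and everywhere defined it is maximal monotone, and since $D(\mathcal{F})=L^2$ the sum $\mathcal{B}=\mathcal{A}+\mathcal{F}$ is maximal monotone as well (cf. \cite{barbu}).

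Nonemptiness and convexity of $S_{\!f}$ are then consequences of general monotone-operator theory. From (ii),
\[\langle\mathcal{B}x,x\rangle_{L^2}=\|x'\|_2^2+\int_0^1\langle f(t,x(t)),x(t)\rangle\,dt\geqslant c_1\|x\|_2^2-\|a\|_1,\]
so $\langle\mathcal{B}x,x\rangle_{L^2}/\|x\|_2\to+\infty$ as $\|x\|_2\to\infty$; since a coercive maximal monotone operator is surjective (the Browder--Minty theorem, cf. \cite{barbu,deimling}), $0\in\im(\mathcal{B})$, i.e. $S_{\!f}\neq\varnothing$. Convexity is immediate: if $\mathcal{B}x_0=\mathcal{B}x_1=0$ and $x_\theta:=(1-\theta)x_0+\theta x_1$ with $\theta\in[0,1]$, then for every $[y,w]\in\Graph(\mathcal{B})$ monotonicity gives $\langle w,y-x_j\rangle\geqslant 0$ for $j=0,1$, hence $\langle w,y-x_\theta\rangle\geqslant 0$, and maximality forces $[x_\theta,0]\in\Graph(\mathcal{B})$, i.e. $x_\theta\in S_{\!f}$.

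Finally, for the weak compactness in $\Hi^2(0,1;\R{N})$ when $c_1>m$: testing $\mathcal{A}x+\mathcal{F}(x)=0$ against $x$ yields $\|x'\|_2^2+c_1\|x\|_2^2\leqslant\|a\|_1$ for every $x\in S_{\!f}$, so $S_{\!f}$ is bounded in $W^{1,2}(0,1;\R{N})$ and hence, by the Sobolev embedding, in $C(I,\R{N})$; re-inserting this into $x''=f(t,x)-Mx'$ and using (i) controls $\|x''\|_2$, so $S_{\!f}$ is bounded in $\Hi^2(0,1;\R{N})$. As $S_{\!f}$ is also convex and norm-closed in $\Hi^2$ — a norm-limit $x$ of solutions $x_n$ is again a solution, since $f(\cdot,x_n)\to f(\cdot,x)$ in $L^2$ by dominated convergence and one passes to the limit in the equation and in the periodic conditions — and $\Hi^2(0,1;\R{N})$ is a Hilbert space, $S_{\!f}$ is weakly compact. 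The essential difficulty is conceptual rather than computational: the degeneracy of $L_0$ cannot be removed by inverting a linear part (as in Section 3) and must instead be absorbed by the monotonicity of the full operator $\mathcal{B}$, which at one stroke yields existence, convexity and the requisite a priori estimates; the step demanding genuine care is the verification that $\mathcal{A}$, and then the sum $\mathcal{A}+\mathcal{F}$, is maximal monotone.
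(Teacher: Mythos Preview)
Your argument for nonemptiness and convexity is essentially the paper's own: both recast \eqref{papa} as the zero set of the maximal monotone operator $(-L)+N_f$ on $L^2(I,\R{N})$, verify maximality of $-L$ via Minty's criterion by exhibiting the invertibility of $-L+id$ (equivalently, incompatibility of $x''+Mx'-x=0$ with periodic conditions), add the everywhere-defined monotone Nemytski\v{\i} operator, and conclude surjectivity from the coercivity furnished by (ii). Your direct computation $\langle\mathcal{A}x,x\rangle_{L^2}=\|x'\|_2^2$ is a crisper substitute for the paper's appeal to Lemma~\ref{L}.

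Where you genuinely diverge is in the $\Hi^2$ a priori bound. The paper first bounds $|\dot{x}(0)|$ pointwise (using $c_1>m$), rewrites \eqref{papa} in Sturm--Liouville form $(e^{Mt}x')'=e^{Mt}f(t,x)$, and cascades bounds on $\|(e^{M\cdot}x')'\|_2$, $\|e^{M\cdot}x'\|_2$, $\|x'\|_2$, $\|x''\|_2$. Your route is shorter: testing $\mathcal{A}x+\mathcal{F}(x)=0$ against $x$ yields immediately $\|x'\|_2^2+c_1\|x\|_2^2\leqslant\|a\|_1$, a $W^{1,2}$ bound that feeds directly into $\|x''\|_2\leqslant\|c\|_2+m\|x\|_2+M\|x'\|_2$ via the equation. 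This avoids the Sturm--Liouville detour and the pointwise control of $\dot{x}(0)$, and --- worth noting --- your estimate nowhere uses the extra hypothesis $c_1>m$: you have in fact established weak compactness of $S_{\!f}$ in $\Hi^2(0,1;\R{N})$ under (i)--(iii) alone. The closure step (norm-limits of solutions are solutions, hence the closed convex bounded set $S_{\!f}$ is weakly compact in the Hilbert space $\Hi^2$) matches the paper's.
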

\begin{proof}
Let us introduce the following notation: \[D_L:=\left\{x\in\Hi^2(0,1;\R{N})\colon x(0)=x(1)\text{ and }x'(0)=x'(1)\right\}\] and $L\colon D_L\subset L^2(I,\R{N})\to L^2(I,\R{N})$ is a differential operator such that $L:=\frac{d^2}{dt^2}+M\frac{d}{dt}$. By $N_f\colon L^2(I,\R{N})\to L^2(I,\R{N})$ we denote the Nemytski\v{\i} operator corresponding to $f$, i.e. $N_f(x):=f(\cdot,x(\cdot))$. In view of Krasnoselskii's theorem (\cite[Th.3.4.4]{papa}), the mapping $N_f$ is well-defined, continuous and bounded on bounded sets. Observe that the solution set $S_{\!f}$ of \eqref{papa} is nothing more than the preimage $((-L)+N_f)^{-1}(\{0\})$.\par In view of Lemma \ref{L}, the operator $-L$ is monotone. It is a matter of easy calculations to show that the reduced system
\begin{equation}
\begin{gathered}
x''+Mx'-x=0\;\;\;\text{on } I,\\
x(0)=x(1),\\
x'(0)=x'(1)
\end{gathered}
\end{equation}
is incompatible. This means that for every $w\in L^2(I,\R{N})$ there exists a (unique) solution $x\in D_L$ of the problem $-Lx+x=w$. Consequently, $(-L+id)\colon D_L\to L^2(I,\R{N})$ is onto. Therefore, operator $-L$ is maximal monotone (\cite[Prop.4.3.]{deimling2}). On the other hand, the Nemytski\v{\i} operator $N_f$ is also maximal monotone (\cite[Prop.3.4.6]{papa}). Now, the idea here is to note that the operator $(-L+N_f)\colon D_L\cap L^2(I,\R{N})\to L^2(I,\R{N})$ is maximal monotone as well. We may apply \cite[Th.11.4(a)]{deimling} to justify this property.\par Let us find out that the operator $-L+N_f$ is coercive, i.e. \[\frac{\langle(-L+N_f)(x),x\rangle_{L^2}}{||x||_2}\xrightarrow[||x||_2\to +\infty]{}+\infty.\]
We have 
\begin{align*}
\frac{\langle(-L+N_f)(x),x\rangle_{L^2}}{||x||_2}&=\frac{\langle -Lx,x\rangle_{L^2}+\langle N_f(x),x\rangle_{L^2}}{||x||_2}\geqslant\frac{\langle N_f(x),x\rangle_{L^2}}{||x||_2}\\&=\frac{1}{||x||_2}\int_0^1\langle f(t,x(t)),x(t)\rangle\,dt\geqslant\frac{1}{||x||_2}\int_0^1c_1|x(t)|^2-a(t)\,dt\\&=\frac{c_1||x||_2^2-||a||_1}{||x||_2}\xrightarrow[||x||_2\to +\infty]{}+\infty.
\end{align*}
In particular, $-L+N_f$ is weakly coercive (in the sense of \cite[Def.3.2.1]{papa}) and we are entitled to take advantage of \cite[Cor.3.2.31]{papa} and \cite[Prop.3.2.34]{papa}. As a result the preimage $((-L)+N_f)^{-1}(\{0\})$ is a nonempty and convex set. It is also a bounded subset of the Lebesgue space $L^2(I,\R{N})$.\par We may assume w.l.o.g that $|f(0,x)|\<c(0)+m|x|<+\infty$ and $\langle x,f(0,x)\rangle\geqslant c_1|x|^2$ for all $x\in\R{N}$. If $x\in S_{\!f}$, then $c_1|\dot{x}(0)|^2\<\langle \dot{x}(0),f(0,\dot{x}(0))\rangle\<|\dot{x}(0)|(c(0)+m|\dot{x}(0)|)$, i.e. $|\dot{x}(0)|\<\frac{c(0)}{c_1-m}$. The periodic problem \eqref{papa} can be easily put into equivalent Sturm-Liouville form
\begin{equation}
\begin{gathered}
\f(e^{Mt}x'(t)\g)'=e^{Mt}f(t,x(t))\;\;\;\text{a.e. on } I,\\
x(0)=x(1),\\
x'(0)=x'(1).
\end{gathered}
\end{equation}
Let $R>0$ be such that $S_{\!f}\subset D_{L^2}(0,R)\subset L^2(I,\R{N})$. Then \[\f\Arrowvert\f(e^{M(\cdot)}x'\g)'\g\Arrowvert_2\!=\!\f\Arrowvert e^{M(\cdot)}N_f(x)\g\Arrowvert_2\<e^M||N_f(x)||_2\<e^M\f(||c||_2+m||x||_2\g)\<e^M\f(||c||_2+mR\g):=R_1.\] On the other hand, we see that 
\begin{align*}
\f\Arrowvert e^{M(\cdot)}x'\g\Arrowvert_2&=\f\Arrowvert\dot{x}(0)+\int_0^{(\cdot)}\f(e^{Ms}x'(s)\g)'\,ds\g\Arrowvert_2\<|\dot{x}(0)|+\f(\int_0^1\f|\int_0^t\f(e^{Ms}x'(s)\g)'\,ds\g|^2\,dt\g)^{\frac{1}{2}}\\&\<|\dot{x}(0)|+\f(\int_0^1\f(\int_0^1\f|\f(e^{Ms}x'(s)\g)'\g|\,ds\g)^2\,dt\g)^{\frac{1}{2}}\<|\dot{x}(0)|+\f(\int_0^1\f\Arrowvert\f(e^{M(\cdot)}x'\g)'\g\Arrowvert_2^2\,dt\g)^{\frac{1}{2}}\\&=|\dot{x}(0)|+\f\Arrowvert\f(e^{M(\cdot)}x'\g)'\g\Arrowvert_2\<\frac{c(0)}{c_1-m}+R_1.
\end{align*}
The mean value theorem indicates that there is $\xi\in I$ such that 
\[||x'||_2\<e^{-M\xi}\f(\frac{c(0)}{c_1-m}+R_1\g)\<\frac{c(0)}{c_1-m}+R_1=:R_2.\] Now, the estimation of the $L^2$-norm of the second derivative follows immediately \[||x''||_2=||N_f(x)-Mx'||_2\<||N_f(x)||_2+M||x'||_2\<||c||_2+mR+MR_2=:R_3.\] Eventually, $||x||_{\Hi^2}=||x||_2+||x'||_2+||x''||_2\<R+R_2+R_3$ for all $x\in S_{\!\!f}$. Clearly, the solution set $S_{\!\!f}$ is relatively weakly compact as a bounded subset of the reflexive Sobolev space $\Hi^2(0,1;\R{N})$.\par Let $(x_n)_\n\subset S_{\!\!f}$ and assume that $x_n\to x$ in $\Hi^2(0,1;\R{N})$. Observe that $x\in D_L$, since $D_L$ forms a closed (linear) subspace of $\Hi^2(0,1;\R{N})$. Knowing that operators $L$ and $N_f$ are continuous we deduce $Lx=\lim\limits_{n\to\infty}Lx_n=\lim\limits_{n\to\infty}N_f(x_n)=N_f(x)$. Therefore, $x\in S_{\!\!f}$ and the solution set $S_{\!\!f}$ is closed in $\Hi^2(0,1;\R{N})$. Since $S_{\!\!f}$ is also convex, it must be weakly compact in $\Hi^2(0,1;\R{N})$.
\end{proof}

\section{the Lipschitz case}
Let $B_1$ and $B_2$ be the boundary conditions operators corresponding to \eqref{inclus}, i.e. 
\begin{equation*}
\begin{gathered}
B_1x:=b_{11}x(0)+b_{12}x'(0)+c_{11}x(1)+c_{12}x'(1),\\
B_2x:=b_{21}x(0)+b_{22}x'(0)+c_{21}x(1)+c_{22}x'(1).
\end{gathered}
\end{equation*}
\indent The following assumption is our standing hypothesis for the rest of this section. \par\vspace{0.5\baselineskip}\noindent
{\bf Assumption $\mathbf{(G_2)}$:}
\begin{minipage}[t]{9.5cm}
The coefficient mappings $a_2,a_1,a_0\colon I\to\R{}$ are continuous. The reduced system (that is the scalar completely homogeneous boundary value problem )
\begin{equation}\label{equation2}
\begin{gathered}
a_2(t)x''+a_1(t)x'+a_0(t)x=0\;\;\;\text{on }I,\\
B_1x=0,\\
B_2x=0
\end{gathered}
\end{equation}
is incompatible, i.e. possesses only the trivial solution. 
\end{minipage}
\par\vspace{0.5\baselineskip}
In what follows we will make use of a relaxed notion of the solution to problem \eqref{inclus}, namely a function $x\colon(0,1)\to E$ is said to be a solution to \eqref{inclus} if $x$ is an element of $W^{2,1}(0,1;E)$ such that the boundary conditions $B_1x=d_1$ and $B_2x=d_2$ are met and the differential inclusion $a_2(t)x''(t)+a_1(t)x'(t)+a_0(t)x(t)\in F(t,x(t))$ is satisfied for a.a. $t\in I$.\par 
Under assumption $\mathbf{(G_2)}$ there exists a unique solution $h\in C^2(I,E)$ of the problem 
\begin{equation*}
\begin{gathered}
a_2(t)h''+a_1(t)h'+a_0(t)h=0\;\;\;\text{on }I,\\
B_1h=d_1,\\
B_2h=d_2
\end{gathered}
\end{equation*}
and uniquely designated Green's function $G\in C(I\times I,\R{})$ for the BVP \eqref{equation2}. Hence, in the context of the above definition, problem \eqref{inclus} is equivalent to the fixed point problem 
\begin{equation}\label{inclusion4}
x\in h+\int_0^1G(\cdot,s)F(s,x(s))\,ds=h+H\circ N_F(x)
\end{equation}
on $W^{2,1}(0,1;E)$, where the operators of Nemytski\v{\i} and Hammerstein possess adequately "enlarged" domain and range, i.e. $N_F\colon W^{2,1}(0,1;E)\map L^1(I,E)$ and $H\colon L^1(I,E)\to W^{2,1}(0,1;E)$.\par To study problem \eqref{inclus} we introduce the following assumptions about the set-valued map $F\colon I\times E\map E$.
\begin{itemize}
\item[$(\h_1)$] the set $F(t,x)$ is nonempty closed and bounded for every $(t,x)\in I\times E$, 
\item[$(\h_2)$] the map $F(\cdot,x)$ is ${\mathcal L}(I)$-measurable for every $x\in E$,
\item[$(\h_3)$] $F$ is Lipschitz with respect to the second argument, i.e. there exists $\mu\in L^1(I,\R{})$ such that $d_E(F(t,x),F(t,y))\<\mu(t)|x-y|$, for all $x,y\in E$, a.e. in $I$,
\item[$(\h_4)$] there exists $\alpha\in L^1(I,\R{})$ such that $d_E(0,F(t,0))\<\alpha(t)$, a.e. in $I$.
\end{itemize}
\par If the right-hand side $F$ is Lipschitz with respect to $x$, then we can say much more about the geometry of the solution set $S_{\!F}$ to BVP \eqref{inclus} than it is merely acyclic. As it is shown in the following result $S_{\!F}$ is even an absolute retract.
\begin{theorem}\label{structure}
Let $E$ be a separable Banach space. Assume that $F\colon I\times E\map E$ satisfies $(\h_1)$-$(\h_4)$. Assume further that 
\begin{equation}\label{norm}
||\mu||_1<\f(\sup_{(t,s)\in I^2}|G(t,s)|\g)^{-1}\!.
\end{equation}
Then the solution set $S_{\!F}$ of the two-point boundary value problem \eqref{inclus} is a retract of the space $W^{2,1}(0,1;E)$.
\end{theorem}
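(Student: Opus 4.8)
The plan is to convert \eqref{inclus} into a fixed point problem for a multivalued contraction with decomposable values on the Bochner space $L^1(I,E)$, invoke the known structural theorem for fixed point sets of such maps, and then transport the resulting retraction back to $W^{2,1}(0,1;E)$ through Green's operator.

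First I would introduce $\Phi\colon L^1(I,E)\map L^1(I,E)$ by $\Phi(w):=N_F\bigl(h+H(w)\bigr)$, i.e. $\Phi(w)=\{v\in L^1(I,E)\colon v(t)\in F(t,h(t)+H(w)(t))$ for a.a. $t\in I\}$. Since $\mathbf{(G_2)}$ yields $G\in C(I^2,\R{})$ and $h\in C^2(I,E)$, the function $h+H(w)$ is bounded and continuous, so by $(\h_3)$ and $(\h_4)$ the multifunction $t\mapsto F(t,(h+H(w))(t))$ is integrably bounded (from $d_E(0,F(t,u))\<\alpha(t)+\mu(t)|u|$); together with $(\h_1)$, $(\h_2)$ and a measurable-selection argument this shows $\Phi(w)$ is a nonempty, closed and decomposable subset of $L^1(I,E)$. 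The decisive point is the equivalence recorded in \eqref{inclusion4}: $x\in W^{2,1}(0,1;E)$ solves \eqref{inclus} iff $x=h+H(w)$ for some $w\in\fix(\Phi)$; applying $L:=a_2\frac{d^2}{dt^2}+a_1\frac{d}{dt}+a_0$ to this identity and using $LH=\mathrm{id}$, $Lh=0$ shows moreover that $w=Lx$ is uniquely recovered from $x$. Hence the affine continuous map $j\colon L^1(I,E)\to W^{2,1}(0,1;E)$, $j(w):=h+H(w)$, satisfies $L\circ j=\mathrm{id}_{L^1}$ and restricts to a homeomorphism of $\fix(\Phi)$ onto $S_{\!F}$ whose inverse is the restriction of the continuous operator $L\colon W^{2,1}(0,1;E)\to L^1(I,E)$.

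Next I would check that $\Phi$ is a Hausdorff contraction with ratio $k:=||\mu||_1\sup_{(t,s)\in I^2}|G(t,s)|<1$, the strict inequality being exactly \eqref{norm}. Given $w_1,w_2$ and $v_1\in\Phi(w_1)$, a standard selection lemma yields, for each $\eps>0$, a measurable selection $v_2$ of $t\mapsto F(t,(h+H(w_2))(t))$ with $|v_1(t)-v_2(t)|\<d(v_1(t),F(t,(h+H(w_2))(t)))+\eps$ a.e.; combining $(\h_3)$ with $|H(w_1-w_2)(t)|\<\sup_{(t,s)\in I^2}|G(t,s)|\,||w_1-w_2||_1$ gives $||v_1-v_2||_1\<k\,||w_1-w_2||_1+\eps$, and letting $\eps\to0^+$ and swapping $w_1,w_2$ produces $d_{L^1}(\Phi(w_1),\Phi(w_2))\<k\,||w_1-w_2||_1$. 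Since $E$, hence $L^1(I,E)$, is separable, the theorem on the fixed point set of a multivalued contraction with decomposable values applies: $\fix(\Phi)$ is a nonempty retract of $L^1(I,E)$; fix a retraction $r\colon L^1(I,E)\to\fix(\Phi)$.

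Finally I would set $\rho:=j\circ r\circ L\colon W^{2,1}(0,1;E)\to W^{2,1}(0,1;E)$. It is continuous, its image lies in $j(\fix(\Phi))=S_{\!F}$, and for $x\in S_{\!F}$ one has $Lx\in\fix(\Phi)$, hence $r(Lx)=Lx$ and $\rho(x)=j(Lx)=h+H(Lx)=x$; thus $\rho$ retracts $W^{2,1}(0,1;E)$ onto $S_{\!F}$, which is the assertion. The main obstacle I expect is the pair of preliminary reductions: verifying that $\Phi$ is well defined with nonempty closed decomposable values and is genuinely a $k$-contraction (the measurable-selection estimate), and checking carefully that $j$ and $L$ really implement a homeomorphism $\fix(\Phi)\cong S_{\!F}$ that is compatible with the ambient retraction $r$. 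Everything after that is formal.
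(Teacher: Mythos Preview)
Your proposal is correct and follows essentially the same route as the paper: the paper also introduces $\Phi=N_F(h+H(\cdot))$ on $L^1(I,E)$, shows it is a multivalued contraction with nonempty closed decomposable values and constant $k=\|\mu\|_1\sup_{(t,s)\in I^2}|G(t,s)|<1$, applies the Bressan--Cellina--Fryszkowski retraction theorem to $\fix(\Phi)$, and then defines the retraction $R(x)=h+H(r(Lx))$, exactly your $\rho=j\circ r\circ L$. The only differences are cosmetic---the paper spells out the measurable-selection step via the Castaing representation and \cite[Prop.~2]{brescol}, and verifies the identification $S_F=h+H(\fix(\Phi))$ by a direct chain of equivalences rather than through the homeomorphism language you use.
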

\begin{proof}
Since 
\begin{align*}
x\in\fix(h+H\circ N_F)&\Leftrightarrow\underset{w}{\exists}\; w\in N_F(x)\wedge x=h+H(w)\\&\Leftrightarrow\underset{w}{\exists}\; w\in N_F(h+H(w))\wedge x=h+H(w)\\&\Leftrightarrow\underset{w}{\exists}\; w\in\fix(N_F(h+H(\cdot)))\wedge x=h+H(w)\\&\Leftrightarrow x\in h+H(\fix(N_F(h+H(\cdot)))),
\end{align*}
we see that there is a coincidence $S_{\!F}=h+H(\fix(N_F(h+H(\cdot))))$. We claim that the fixed point set $\fix(N_F(h+H(\cdot)))$ is a retract of the space $L^1(I,E)$.\par First of all, let us evaluate the norm of the bounded linear Hammerstein integral operator $H\colon L^1(I,E)\to C(I,E)$,
\begin{equation}\label{normofH}
\begin{split}
||H||_{\mathcal L}&=\sup_{||w||_1=1}||H(w)||\<\sup_{||w||_1=1}\sup_{t\in I}\int_0^1|G(t,s)||w(s)|ds\<\sup_{||w||_1=1}\sup_{t\in I}||G(t,\cdot)||\,||w||_1\\&=\sup_{(t,s)\in I^2}|G(t,s)|.
\end{split}
\end{equation}
\par It follows from $(\h_3)$ that $F(t,\cdot)$ is Hausdorff continuous. By virtue of \cite[Th.3.3]{papa2} the map $F$ is ${\mathcal L}(I)\otimes{\mathcal B}(E)$-measurable. In particular $F(\cdot,h(\cdot)+H(u)(\cdot))$ is measurable for $u\in L^1(I,E)$, since $F$ is superpositionally measurable (\cite[Th.1.]{zyg}). Thanks to $(\h_3)$ and $(\h_4)$ we have that the function $I\ni t\mapsto\inf\{|z|\colon z\in F(t,h(t)+H(u)(t))\}$ is integrable in the sense of Lebesgue, which means that $N_F(h+H(u))\neq\varnothing$ for any $u\in L^1(I,E)$.\par Fix arguments $u_1,u_2\in L^1(I,E)$. Take an arbitrary $w_1\in N_F(h+H(u_1))$ and $\eps>0$. It is clear that $t\mapsto d(w_1(t),F(t,h(t)+H(u_2)(t)))$ is measurable. By $(\h_3)$ we have \[d(w_1(t),F(t,h(t)+H(u_2)(t)))\<\mu(t)|H(u_1-u_2)(t)|\;\;\text{a.e. on }I,\] where $\mu(\cdot)|H(u_1-u_2)(\cdot)|\in L^1(I,\R{})$. Thus, $d(w_1(\cdot),F(\cdot,h(\cdot)+H(u_2)(\cdot)))\in L^1(I,\R{})$. Define $\phi:=\essinf\{|u(\cdot)|\colon u\in K\}$, where $K:=\{w_1\}-N_F(h+H(u_2))$ is a nonempty closed and decomposable subset of $L^1(I,E)$. Applying the Castaing representation we may write $F(t,h(t)+H(u_2)(t))=\overline{\{g_n(t)\}}_\n$. Thus, for every $\n$, $\phi(t)\<|w_1(t)-g_n(t)|$ a.e. in $I$. Obviously, there is a subset $J\subset I$ of full measure such that for every $t\in J$, $\phi(t)\<\inf_\n|w_1(t)-g_n(t)|$. Consequently, 
\begin{align*}
\phi(t)\<d\left(w_1(t),\{g_n(t)\}_\n\right)&=d\left(w_1(t),\overline{\{g_n(t)\}}_\n\right)=d(w_1(t),F(t,h(t)+H(u_2)(t)))\\&<d(w_1(t),F(t,h(t)+H(u_2)(t)))+\eps
\end{align*}
a.e. on $I$. It follows by \cite[Prop.2.]{brescol} that there is $w_2\in N_F(h+H(u_2))$ such that \[|w_1(t)-w_2(t)|<d(w_1(t),F(t,h(t)+H(u_2)(t)))+\eps\;\;\text{for a.a. }t\in I.\] Thus, we can estimate
\begin{align*}
||w_1-w_2||_1&=\int_0^1|w_1(t)-w_2(t)|\,dt\<\int_0^1(d(w_1(t),F(t,h(t)+H(u_2)(t)))+\eps)\,dt\\&\<\int_0^1d_E(F(t,h(t)+H(u_1)(t)),F(t,h(t)+H(u_2)(t))\,dt+\eps\\&\<\int_0^1\mu(t)|H(u_1)(t)-H(u_2)(t)|\,dt+\eps\<||\mu||_1||H(u_1-u_2)||+\eps\\&\<||\mu||_1||H||_{\mathcal L}||u_1-u_2||_1+\eps
\end{align*}
Since $\eps$ was arbitrarily small and $w_1$ was an arbitrary element of $N_F(h+H(u_1))$ it follows that \[\sup_{w\in N_F(h+H(u_1))}d(w,N_F(h+H(u_2)))\<||\mu||_1||H||_{\mathcal L}||u_1-u_2||_1\] and consequently \[d_{L^1}(N_F(h+H(u_1)),N_F(h+H(u_2)))\<||\mu||_1||H||_{\mathcal L}||u_1-u_2||_1\] for every $u_1,u_2\in L^1(I,E)$. In view of \eqref{norm} and \eqref{normofH} we conclude that the set-valued operator $N_F(h+H(\cdot))\colon L^1(I,E)\map L^1(I,E)$ is contractive.\par Take $w_1\in N_F(h+H(u))$. As we have seen, for each $\eps>0$ there exists $w_2\in N_F(0)$ satisfying $|w_1(t)-w_2(t)|<d(w_1(t),F(t,0))+\eps$ a.e. on $I$. Thus, 
\begin{align*}
|w_1(t)|&\<|w_1(t)-w_2(t)|+|w_2(t)|\<d_E(F(t,h(t)+H(u)(t)),F(t,0))+\eps+d_E(0,F(t,0))\\&\<\mu(t)|h(t)+H(u)(t)|+\eps+\alpha(t),
\end{align*}
by $(\h_3)$ and $(\h_4)$. In other words, operator $N_F(h+H(\cdot))$ possesses nonempty bounded values. It is a matter of routine to check that they are also closed and decomposable. Now, we can invoke \cite[Th.1.]{bres} to indicate a retraction $r\colon L^1(I,E)\to\fix(N_F(h+H(\cdot)))$.\par Denote by $L\colon W^{2,1}(0,1;E)\to L^1(I,E)$ a linear bounded differential operator corresponding to the problem \eqref{inclus}, i.e. $Lx:=a_2x''+a_1x'+a_0x$.
Let $R\colon W^{2,1}(0,1;E)\to S_{\!F}$ be a map which associates with each $x\in W^{2,1}(0,1;E)$ a function $R(x):=h+H(r(Lx))$. It is clear that the continuity of the composite function $R$ follows directly from the continuity of its components $H$, $r$ and $L$. \par It remained to us justify that $\res{R}{S_{\!F}}=id_{S_{\!F}}$. Suppose $x$ is a solution of \eqref{inclus}. Then $x=h+H(w)$ for some $w\in N_F(x)$. In fact, $Lx=Lh+LH(w)=w$ and $w\in N_F(h+H(w))$. Hence, $Lx\in\fix(N_F(h+H(\cdot)))$ and $r(Lx)=Lx$. Eventually, $R(x)=h+H(r(Lx))=h+H(Lx)=h+H(w)=x$. In conclusion, the mapping $R$ gives a retraction of the space $W^{2,1}(0,1;E)$ onto the solution set $S_{\!F}$.
\end{proof}
\begin{remark}
In view of Theorem \ref{structure}, the space of solutions $S_{\!F}$ with the $W^{2,1}(0,1;E)$ metric is an absolute retract. In particular, $S_{\!F}$ forms a nonempty and closed subset of the space $W^{2,1}(0,1;E)$.
\end{remark}
\begin{example}
Consider \eqref{inclus} with the coefficients $a_2\equiv 1$, $a_1\equiv 0$, $a_0\equiv -1$ and the boundary conditions being periodic. Then the BVP \eqref{inclus} reduces to the problem \eqref{reduced}. The influence function $G$ for the problem \eqref{reduced} is given by \eqref{green}. It is easy to calculate that \[\sup_{(t,s)\in I^2}|G(t,s)|=-\frac{1}{2}\frac{e+1}{1-e}.\] Therefore, in this particular case condition \eqref{norm} assumes the form \[||\mu||_1<-2\frac{1-e}{e+1}\approx 0.924.\]
\end{example}

\end{document}